\newtheorem{theorem}{Theorem}[section]
\newtheorem{lemma}[theorem]{Lemma}
\newtheorem{corollary}[theorem]{Corollary}
\newtheorem{question}[theorem]{Question}
\newtheorem{example}[theorem]{Example}
\theoremstyle{definition}
\newtheorem{definition}[theorem]{Definition}
\newtheorem{proposition}[theorem]{Proposition}
\theoremstyle{remark}
\begin{document}

\title[Submaximal properties in (strongly) topological gyrogroups]
{Submaximal properties in (strongly) topological gyrogroups}

\author{Meng Bao}
\address{(Meng Bao): 1. School of mathematics and statistics,
Minnan Normal University, Zhangzhou 363000, P. R. China; 2. College of Mathematics, Sichuan University, Chengdu 610064, P. R. China}
\email{mengbao95213@163.com}

\author{Fucai Lin*}
\address{(Fucai Lin): School of mathematics and statistics,
Minnan Normal University, Zhangzhou 363000, P. R. China}
\email{linfucai2008@aliyun.com; linfucai@mnnu.edu.cn}

\thanks{The authors are supported by the Key Program of the Natural Science Foundation of Fujian Province (No: 2020J02043), the NSFC (No. 11571158), the lab of Granular Computing, the Institute of Meteorological Big Data-Digital Fujian and Fujian Key Laboratory of Data Science and Statistics.\\
*corresponding author}

\keywords{topological gyrogroups; strongly topological gyrogroups; Submaximal properties; paracompact.}
\subjclass[2018]{Primary 54A20; secondary 11B05; 26A03; 40A05; 40A30; 40A99.}

\begin{abstract}
A space $X$ is submaximal if any dense subset of $X$ is open. In this paper, we prove that every submaximal topological gyrogroup of non-measurable cardinality is strongly $\sigma$-discrete. Moreover, we prove that every submaximal strongly topological gyrogroup of non-measurable cardinality is hereditarily paracompact.
\end{abstract}

\maketitle
\section{Introduction}

In the 1940s, E. Hewitt in \cite{He} introduced the concepts of maximality and submaximality of general topological spaces, which are important tools to deal with extreme cases when studying the family of topologies without isolated points. At the same time, E. Hewitt found a general way to construct maximal and submaximal topologies. He constructed maximal and submaximal topologies by transfinite induction and by the method that any chain of topologies on the same set has an upper bound with the same separation axioms. Then, A.V. Arhangel' ski\v\i ~and P.J. Collins \cite{AP} began to study the class of submaximal spaces systematically in 1995 and give some necessary and sufficient conditions for a space to be submaximal. In 1998, O. Alas, I. Protasov, M. Tkachenko, etc.\cite{APT} studied the maximal and submaximal groups and proved that every submaximal topological group of non-measurable cardinality is strongly $\sigma$-discrete, and every submaximal strongly topological group of non-measurable cardinality is hereditarily paracompact.

The gyrogroup was first introduced by A.A. Ungar in \cite{UA1988} when he researched $c$-ball of relativistically admissible velocities with Einstein velocity addition. The Einstein velocity addition $\oplus _{E}$ is given as follows: $$\mathbf{u}\oplus _{E}\mathbf{v}=\frac{1}{1+\frac{\mathbf{u}\cdot \mathbf{v}}{c^{2}}}(\mathbf{u}+\frac{1}{\gamma _{\mathbf{u}}}\mathbf{v}+\frac{1}{c^{2}}\frac{\gamma _{\mathbf{u}}}{1+\gamma _{\mathbf{u}}}(\mathbf{u}\cdot \mathbf{v})\mathbf{u}),$$ where $\mathbf{u,v}\in \mathbb{R}_{c}^{3}=\{\mathbf{v}\in \mathbb{R}^{3}:||\mathbf{v}||<c\}$ and $\gamma _{\mathbf{u}}$ is given by $$\gamma _{\mathbf{u}}=\frac{1}{\sqrt{1-\frac{\mathbf{u}\cdot \mathbf{u}}{c^{2}}}}.$$ From \cite{UA2002}, we know that the gyrogroup has a weaker algebraic structure than a group. In 2017, W. Atiponrat \cite{AW} gave the concept of topological gyrogroups. Then Z. Cai, S. Lin and W. He in \cite{CZ} proved that every topological gyrogroup is a rectifiable space. In 2019, the authors \cite{BL} defined the concept of strongly topological gyrogroups, and proved that every feathered strongly topological gyrogroup is paracompact. Moreover, the authors proved that every strongly topological gyrogroup with a countable pseudocharacter is submetrizable and every locally paracompact strongly topological gyrogroup is paracompact, see \cite{BL1,BL2}.

In this paper, we generalize some well known results in the class of submaximal topological groups to submaximal topological gyrogroups. In particular, we prove that every submaximal topological gyrogroup of non-measurable cardinality is strongly $\sigma$-discrete and every submaximal strongly topological gyrogroup of non-measurable cardinality is hereditarily paracompact, which generalizes some results in \cite{APT}.

\smallskip
\section{Preliminaries}
Throughout this paper, all topological spaces are assumed to be Hausdorff and dense in themselves, unless otherwise is explicitly stated. Let $\mathbb{N}$ be the set of all positive integers and $\omega$ the first infinite ordinal. Let $X$ be a topological space and $A \subseteq X$ be a subset of $X$.
  The {\it closure} of $A$ in $X$ is denoted by $\overline{A}$ and the
  {\it interior} of $A$ in $X$ is denoted by $\mbox{Int}(A)$. A cardinal number $m$ is called {\it non-measurable} \cite{E} provided that the only countably additive two-valued measure defined on the family of all subsets of a set $X$ of cardinality $m$ which vanishes on all one-point sets is the trivial measure, identically equal to zero.
  The readers may consult \cite{AA, E, linbook} for notation and terminology not explicitly given here.

\begin{definition}\cite{AW}
Let $G$ be a nonempty set, and let $\oplus: G\times G\rightarrow G$ be a binary operation on $G$. Then the pair $(G, \oplus)$ is called a {\it groupoid}. A function $f$ from a groupoid $(G_{1}, \oplus_{1})$ to a groupoid $(G_{2}, \oplus_{2})$ is called a {\it groupoid homomorphism} if $f(x\oplus_{1}y)=f(x)\oplus_{2} f(y)$ for all elements $x, y\in G_{1}$. Furthermore, a bijective groupoid homomorphism from a groupoid $(G, \oplus)$ to itself will be called a {\it groupoid automorphism}. We write $\mbox{Aut}(G, \oplus)$ for the set of all automorphisms of a groupoid $(G, \oplus)$.
\end{definition}

\begin{definition}\cite{UA}
Let $(G, \oplus)$ be a groupoid. The system $(G,\oplus)$ is called a {\it gyrogroup}, if its binary operation satisfies the following conditions:

\smallskip
(G1) There exists a unique identity element $0\in G$ such that $0\oplus a=a=a\oplus0$ for all $a\in G$.

\smallskip
(G2) For each $x\in G$, there exists a unique inverse element $\ominus x\in G$ such that $\ominus x \oplus x=0=x\oplus (\ominus x)$.

\smallskip
(G3) For all $x, y\in G$, there exists $\mbox{gyr}[x, y]\in \mbox{Aut}(G, \oplus)$ with the property that $x\oplus (y\oplus z)=(x\oplus y)\oplus \mbox{gyr}[x, y](z)$ for all $z\in G$.

\smallskip
(G4) For any $x, y\in G$, $\mbox{gyr}[x\oplus y, y]=\mbox{gyr}[x, y]$.
\end{definition}

\begin{lemma}\cite{UA}\label{a}
Let $(G, \oplus)$ be a gyrogroup. Then for any $x, y, z\in G$, we obtain the following:

\begin{enumerate}
\smallskip
\item $(\ominus x)\oplus (x\oplus y)=y$. \ \ \ (left cancellation law)

\smallskip
\item $(x\oplus (\ominus y))\oplus gyr[x, \ominus y](y)=x$. \ \ \ (right cancellation law)

\smallskip
\item $(x\oplus gyr[x, y](\ominus y))\oplus y=x$.

\smallskip
\item $gyr[x, y](z)=\ominus (x\oplus y)\oplus (x\oplus (y\oplus z))$.
\end{enumerate}
\end{lemma}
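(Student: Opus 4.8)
This is a classical package of algebraic identities valid in any gyrogroup (essentially Ungar's cancellation laws from \cite{UA}), so the plan is a purely algebraic argument from the axioms (G1)--(G4); no topology intervenes. Before handling the four displayed identities I would record three preliminary facts. First, $\ominus(\ominus x)=x$: by (G2) we have $(\ominus x)\oplus x=0=x\oplus(\ominus x)$, so $x$ is an inverse of $\ominus x$, and uniqueness of inverses in (G2) forces $\ominus(\ominus x)=x$. Second, a \emph{general left cancellation law}: if $a\oplus b=a\oplus c$ then $b=c$; indeed, from $(\ominus a)\oplus(a\oplus b)=(\ominus a)\oplus(a\oplus c)$, applying (G3) to both sides and using (G2), (G1) gives $\mathrm{gyr}[\ominus a,a](b)=\mathrm{gyr}[\ominus a,a](c)$, and since $\mathrm{gyr}[\ominus a,a]\in\mathrm{Aut}(G,\oplus)$ is a bijection we conclude $b=c$. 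Third, $\mathrm{gyr}[\ominus x,x]=\mathrm{id}$: putting the first variable equal to $0$ in (G3) and using (G1) gives $x\oplus z=x\oplus\mathrm{gyr}[0,x](z)$ for all $x,z$, so the general left cancellation law yields $\mathrm{gyr}[0,x]=\mathrm{id}$; then (G4) applied to the pair $(\ominus x,x)$ gives $\mathrm{gyr}[\ominus x,x]=\mathrm{gyr}[(\ominus x)\oplus x,x]=\mathrm{gyr}[0,x]=\mathrm{id}$.

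With these facts in hand, three of the four assertions are short. For (1): by (G3) and the third fact, $(\ominus x)\oplus(x\oplus y)=((\ominus x)\oplus x)\oplus\mathrm{gyr}[\ominus x,x](y)=0\oplus y=y$. For (4): (G3) gives $x\oplus(y\oplus z)=(x\oplus y)\oplus\mathrm{gyr}[x,y](z)$, whence $\ominus(x\oplus y)\oplus\bigl(x\oplus(y\oplus z)\bigr)=\ominus(x\oplus y)\oplus\bigl((x\oplus y)\oplus\mathrm{gyr}[x,y](z)\bigr)=\mathrm{gyr}[x,y](z)$ by (1) applied with $x\oplus y$ in place of $x$. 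For (2): apply (G3) to the triple $(x,\ominus y,y)$ to get $x=x\oplus 0=x\oplus\bigl((\ominus y)\oplus y\bigr)=\bigl(x\oplus(\ominus y)\bigr)\oplus\mathrm{gyr}[x,\ominus y](y)$.

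The laborious one is (3). My plan is first to establish the \emph{gyration inversion law} $\mathrm{gyr}[a,b]^{-1}=\mathrm{gyr}[b,a]$; this is the real technical heart and, in the purely axiomatic setting, requires a short chain of auxiliary gyration identities extracted from (G3) and the loop property (G4). Granting it, the \emph{right gyroassociative law} $(a\oplus b)\oplus c=a\oplus\bigl(b\oplus\mathrm{gyr}[b,a](c)\bigr)$ follows by substituting $c\mapsto\mathrm{gyr}[b,a](c)$ into (G3). Applying this law to $\bigl(x\oplus\mathrm{gyr}[x,y](\ominus y)\bigr)\oplus y$ and using the general left cancellation law reduces (3) to the identity $\mathrm{gyr}[x,y](\ominus y)\oplus\mathrm{gyr}\bigl[\mathrm{gyr}[x,y](\ominus y),x\bigr](y)=0$, equivalently (using $\ominus(\ominus y)=y$ and that $\mathrm{gyr}[x,y]$ is an automorphism) to $\mathrm{gyr}\bigl[\mathrm{gyr}[x,y](\ominus y),x\bigr](y)=\mathrm{gyr}[x,y](y)$. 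Here (4) rewrites $\mathrm{gyr}[x,y](\ominus y)=\ominus(x\oplus y)\oplus x$, and then (G4) collapses the outer gyration to $\mathrm{gyr}[\ominus(x\oplus y),x]$, so that what remains is to identify $\mathrm{gyr}[\ominus(x\oplus y),x]$ with $\mathrm{gyr}[x,y]$ on the element $y$ --- exactly the sort of relation one squeezes out of the inversion law together with the even and loop properties of gyrations. I expect assembling this final chain of gyration identities (and proving the inversion law in the first place) to be the only genuine obstacle; the remaining manipulations are routine bookkeeping. Since all of this is standard, an equally acceptable route is simply to cite \cite{UA} for Lemma~\ref{a}.
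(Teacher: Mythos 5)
First, a point of reference: the paper supplies no proof of this lemma at all --- it is quoted from \cite{UA} as a known result, so your closing remark that one may simply cite \cite{UA} is precisely what the authors do. Judged as a self-contained derivation from (G1)--(G4), your preliminary facts ($\ominus(\ominus x)=x$ via uniqueness of inverses, the general left cancellation law via bijectivity of gyrations, and $\mathrm{gyr}[0,x]=\mathrm{gyr}[\ominus x,x]=\mathrm{id}$ via (G4)) are all correct, and your proofs of items (1), (2) and (4) are complete and are the standard short arguments.

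Item (3), however, is only a plan, and the plan hinges on two facts that you assert but never establish: the gyroautomorphism inversion law $\mathrm{gyr}[a,b]^{-1}=\mathrm{gyr}[b,a]$ (which you need to convert (G3) into the right gyroassociative law), and --- after your correct reductions using (4) and (G4) --- the nested-gyration identity $\mathrm{gyr}[\ominus(x\oplus y),x]=\mathrm{gyr}[x,y]$, at least in its evaluation at $y$. Neither is a one-line consequence of the axioms: in Ungar's development the inversion law is obtained through a further chain involving the gyrosum inversion $\ominus(a\oplus b)=\mathrm{gyr}[a,b]\bigl((\ominus b)\oplus(\ominus a)\bigr)$ (this one at least is quick: by (G3) and your item (1), $(a\oplus b)\oplus\mathrm{gyr}[a,b]\bigl((\ominus b)\oplus(\ominus a)\bigr)=a\oplus\bigl(b\oplus((\ominus b)\oplus(\ominus a))\bigr)=0$, and uniqueness of inverses finishes it), together with the even property $\mathrm{gyr}[\ominus a,\ominus b]=\mathrm{gyr}[a,b]$ and the right loop property $\mathrm{gyr}[a,b\oplus a]=\mathrm{gyr}[a,b]$, each of which itself requires an argument. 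Since you explicitly defer exactly these steps, the proposal as written does not prove (3); you should either supply that chain of gyration identities or fall back on the citation, which is the route the paper takes.
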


The definition of a subgyrogroup is given as follows.

\begin{definition}\cite{ST}
Let $(G,\oplus)$ be a gyrogroup. A nonempty subset $H$ of $G$ is called a {\it subgyrogroup}, denoted
by $H\leq G$, if $H$ forms a gyrogroup under the operation inherited from $G$ and the restriction of $gyr[a,b]$ to $H$ is an automorphism of $H$ for all $a,b\in H$.

\smallskip
Furthermore, a subgyrogroup $H$ of $G$ is said to be an {\it $L$-subgyrogroup}, denoted
by $H\leq_{L} G$, if $gyr[a, h](H)=H$ for all $a\in G$ and $h\in H$.
\end{definition}

The subgyrogroup criterion is given in \cite{ST} (that is, a nonempty subset $H$ of a gyrogroup $G$ is a subgyrogroup if and only if $\ominus a\in H$ and $a\oplus b\in H$ for all $a, b\in H$) which explains that by the item (4) in Lemma \ref{a} it follows the subgyrogroup criterion.

\begin{definition}\cite{AW}
A triple $(G, \tau, \oplus)$ is called a {\it topological gyrogroup} if the following statements hold:

\smallskip
(1) $(G, \tau)$ is a topological space.

\smallskip
(2) $(G, \oplus)$ is a gyrogroup.

\smallskip
(3) The binary operation $\oplus: G\times G\rightarrow G$ is jointly continuous while $G\times G$ is endowed with the product topology, and the operation of taking the inverse $\ominus (\cdot): G\rightarrow G$, i.e. $x\rightarrow \ominus x$, is also continuous.
\end{definition}

It is clear that each topological group is a topological gyrogroup. However, every topological gyrogroup whose gyrations are not identically equal to the identity is not a topological group.

\begin{example}\cite{AW}
The Einstein gyrogroup with the standard topology is a topological gyrogroup but not a topological group.
\end{example}

The Einstein gyrogroup has been introduced in the Introduction. It was proved in \cite{UA} that $(\mathbb{R}^{3}_{c},\oplus _{E})$ is a gyrogroup but not a group. Moreover, with the standard topology inherited from $\mathbb{R}^{3}$, it is clear that $\oplus _{E}$ is continuous. Finally, $-\mathbf{u}$ is the inverse of $\mathbf{u}\in \mathbb{R}^{3}$ and the operation of taking the inverse is also continuous. Therefore, the Einstein gyrogroup $(\mathbb{R}^{3}_{c},\oplus _{E})$ with the standard topology inherited from $\mathbb{R}^{3}$ is a topological gyrogroup but not a topological group.

\begin{definition}\cite{He}
A topological space $(X,\tau)$ is called {\it maximal} if for any topology $\mu$ on $X$ strictly finer that $\tau$, the space $(X,\mu)$ has an isolated point. A space $X$ is {\it submaximal} if any dense subset of $X$ is open.
\end{definition}

\begin{definition}\cite{APT}
A non-empty family $\mathcal{D}$ of dense subsets of a space $X$ is called a {\it filter of dense subsets} of $X$ if $\mathcal{D}$ is closed with respect to finite intersections and $D\in \mathcal{D}$, $D\subset D_{1}\subset X$ implies $D_{1}\in \mathcal{D}$. The family $\mathcal{D}$ is called an {\it ultrafilter of dense subsets} of $X$ if there is no filter of dense subsets of $X$ that properly contains $\mathcal{D}$.
\end{definition}

\begin{definition}\cite{APT}
A space is called {\it irresolvable} if it is not the union of two disjoint dense subsets.
\end{definition}

\begin{definition}\cite{E}
A space $X$ is called {\it collectionwise Hausdorff} if for any discrete subset $A$ of $X$ it is possible to choose an open set $V_{p}$ containing $p$ for every $p\in A$ in such a way that the family $\{V_{p}:p\in A\}$ is discrete.
\end{definition}

\begin{definition}\cite{E}\label{d00}
If $X$ is a space and $x\in X$, then the {\it dispersion character} $\Delta (x,X)$ of $X$ at the point $x$ is the minimum of the cardinalities of open subsets of $X$ containing $x$. The cardinal number $\Delta (X)=min\{\Delta (x,X):x\in X\}$ is called the {\it dispersion character} of $X$.
\end{definition}

Next, we recall the definition of strongly topological gyrogroups.

\begin{definition}\cite{BL}\label{d11}
Let $G$ be a topological gyrogroup. We say that $G$ is a {\it strongly topological gyrogroup} if there exists a neighborhood base $\mathscr U$ of $0$ such that, for every $U\in \mathscr U$, $\mbox{gyr}[x, y](U)=U$ for any $x, y\in G$. For convenience, we say that $G$ is a strongly topological gyrogroup with neighborhood base $\mathscr U$ of $0$.
\end{definition}

For each $U\in \mathscr U$, we can set $V=U\cup (\ominus U)$. Then, $$gyr[x,y](V)=gyr[x, y](U\cup (\ominus U))=gyr[x, y](U)\cup (\ominus gyr[x, y](U))=U\cup (\ominus U)=V,$$ for all $x, y\in G$. Obviously, the family $\{U\cup(\ominus U): U\in \mathscr U\}$ is also a neighborhood base of $0$. Therefore, we may assume that $U$ is symmetric for each $U\in\mathscr U$ in Definition~\ref{d11}.

In \cite{BL}, the authors proved that there is a strongly topological gyrogroup which is not a topological group, see Example \ref{lz1}.

\begin{example}\cite{BL}\label{lz1}
Let $\mathbb{D}$ be the complex open unit disk $\{z\in \mathbb{C}:|z|<1\}$. We consider $\mathbb{D}$ with the standard topology. In \cite[Example 2]{AW}, define a M\"{o}bius addition $\oplus _{M}: \mathbb{D}\times \mathbb{D}\rightarrow \mathbb{D}$ to be a function such that $$a\oplus _{M}b=\frac{a+b}{1+\bar{a}b}\ \mbox{for all}\ a, b\in \mathbb{D}.$$ Then $(\mathbb{D}, \oplus _{M})$ is a gyrogroup, and it follows from \cite[Example 2]{AW} that $$gyr[a, b](c)=\frac{1+a\bar{b}}{1+\bar{a}b}c\ \mbox{for any}\ a, b, c\in \mathbb{D}.$$ For any $n\in \mathbb{N}$, let $U_{n}=\{x\in \mathbb{D}: |x|\leq \frac{1}{n}\}$. Then, $\mathscr U=\{U_{n}: n\in \mathbb{N}\}$ is a neighborhood base of $0$. Moreover, we observe that $|\frac{1+a\bar{b}}{1+\bar{a}b}|=1$. Therefore, we obtain that $gyr[x, y](U)\subset U$, for any $x, y\in \mathbb{D}$ and each $U\in \mathscr U$, then it follows that $gyr[x, y](U)=U$ by \cite[Proposition 2.6]{ST}. Hence, $(\mathbb{D}, \oplus _{M})$ is a strongly topological gyrogroup. However, $(\mathbb{D}, \oplus _{M})$ is not a group \cite[Example 2]{AW}.
\end{example}

Indeed, it is well known that M\"{o}bius gyrogroups, Einstein gyrogroups, and Proper velocity gyrogroups, that were studied in \cite{FM, FM1,FM2,UA}, are all strongly topological gyrogroups. Therefore, they are all topological gyrogroups and rectifiable spaces. At the same time, all of them are not topological groups. Further, it was also proved in \cite[Example 3.2]{BL} that there exists a strongly topological gyrogroup which has an infinite $L$-subgyrogroup.

\smallskip
\section{submaximal properties of topological gyrogroups}
In this section, we mainly prove that every submaximal topological gyrogroup of non-measurable cardinality is strongly $\sigma$-discrete. First, we show that, for any cardinality $\kappa>\omega$, there exists a gyrogroup $G$ with subgyrogroup $H$ of the cardinality $\kappa$ such that $H$ is not a group.

\begin{example}
For any cardinality $\kappa>\omega$, there exists a gyrogroup $G$ with subgyrogroup $H$ of the cardinality $\kappa$ such that $H$ is not a group.
\end{example}

Let $\mathbb{D}$ be gyrogroup in Example~\ref{lz1} and let $\kappa$ be an infinite cardinal number. It follows from \cite[Theorem 2.1]{ST2} that $\mathbb{D}^{\kappa}$ is a gyrogroup. Fix a subset $X$ of the gyrogroup $\mathbb{D}^{\kappa}$ such that the cardinality of $X$ is equal to $\kappa$ and $X$ contains arbitrary three points $x=(x_{\alpha})_{\alpha<\kappa}$, $y=(y_{\alpha})_{\alpha<\kappa}$ and $z=(z_{\alpha})_{\alpha<\kappa}$ of $\mathbb{D}^{\kappa}$ such that there exists $\beta<\alpha$ with $x_{\beta}=1/2, y_{\beta}=i/2$ and $z_{\beta}=-1/2$. From the proof of \cite[Example 2]{AW}, we see that $x\oplus (y\oplus z)\neq (x\oplus y)\oplus z$. Put $H=\langle X\rangle$, that is, $H$ is a subgyrogroup generated from $X$. Then the cardinality of $H$ is also equal to $\kappa$. Moreover, since $x, y, z\in H$, it follows that $H$ is not a group.

\begin{proposition}\label{p0000}
Let $G$ be a gyrogroup of cardinality $\kappa>\omega$. Then, for any $\omega<\alpha<\kappa$, there exists a subgyrogroup $G_{\alpha}$ of $G$ with the cardinality $\alpha$.
\end{proposition}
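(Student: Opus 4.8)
The plan is to build $G_\alpha$ by a standard closure/transfinite-induction argument, enlarging a starting set of cardinality $\alpha$ to a subgyrogroup while controlling the cardinality at each step. First I would pick an arbitrary subset $X_0\subseteq G$ with $|X_0|=\alpha$; this is possible since $\alpha<\kappa=|G|$. The key observation is that the subgyrogroup criterion recalled after the definition of subgyrogroup states that a nonempty $H\subseteq G$ is a subgyrogroup precisely when $\ominus a\in H$ and $a\oplus b\in H$ for all $a,b\in H$. Note that we do \emph{not} need to separately close under the gyroautomorphisms $\mathrm{gyr}[a,b]$: by item (4) of Lemma~\ref{a}, $\mathrm{gyr}[a,b](c)=\ominus(a\oplus b)\oplus(a\oplus(b\oplus c))$, so membership under gyrations follows automatically once the set is closed under $\oplus$ and $\ominus$.

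Next I would iterate. Given $X_n$ with $|X_n|\le\alpha$, set
\[
X_{n+1}=X_n\cup\{\ominus a: a\in X_n\}\cup\{a\oplus b: a,b\in X_n\}.
\]
Since $\alpha$ is infinite, $|X_{n+1}|\le \alpha+\alpha+\alpha\cdot\alpha=\alpha$. Then put $H=\bigcup_{n\in\omega}X_n$. A routine verification shows $H$ is closed under $\ominus$ and $\oplus$: any $a,b\in H$ lie in some common $X_n$, hence $\ominus a, a\oplus b\in X_{n+1}\subseteq H$. By the subgyrogroup criterion, $H$ is a subgyrogroup of $G$, and $|H|\le\sum_{n\in\omega}|X_n|\le\omega\cdot\alpha=\alpha$. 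Since $X_0\subseteq H$ forces $|H|\ge\alpha$, we get $|H|=\alpha$, so $G_\alpha:=H$ works.

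The only point requiring a little care — and the place I expect a referee to look hardest — is confirming that the restriction of $\mathrm{gyr}[a,b]$ to $H$ is genuinely an automorphism of $H$ (part of the definition of subgyrogroup), rather than merely a self-map. Here one uses that $\mathrm{gyr}[a,b]\in\mathrm{Aut}(G,\oplus)$ is already a bijective homomorphism of $G$; its restriction to $H$ lands in $H$ by the Lemma~\ref{a}(4) identity above applied inside $H$, and its inverse is $\mathrm{gyr}[a,b]^{-1}=\mathrm{gyr}[b,a]$ (a standard gyrogroup identity), which also preserves $H$ by the same reasoning, so the restriction is a bijection of $H$ onto itself and hence a groupoid automorphism of $H$. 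This handles the case $\omega<\alpha<\kappa$; the statement as written excludes $\alpha=\kappa$ and $\alpha=\omega$, so no further cases are needed.
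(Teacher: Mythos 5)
Your proposal is correct and follows essentially the same route as the paper: start from a set of cardinality $\alpha$, iterate closure under $\oplus$ and $\ominus$ through countably many stages with the cardinality staying at $\alpha$, take the union, and invoke the subgyrogroup criterion (which, via Lemma~\ref{a}(4), disposes of the gyration-closure issue you rightly flag). The only cosmetic difference is that the paper starts from a symmetric set containing $0$ and closes via $Y_{n+1}=\ominus(Y_n\oplus Y_n)\cup(Y_n\oplus Y_n)$, whereas you add $X_n$, $\ominus X_n$ and $X_n\oplus X_n$ explicitly at each stage; the two constructions are interchangeable.
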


\begin{proof}
Take an arbitrary subset $Y$ of $G$ such that $Y=\ominus Y$, $0\in Y$ and $|Y|=\alpha$. Let $Y_{0}=Y$. By induction, we assume that we have defined $Y_{1}, \ldots, Y_{n}$ of subsets of $G$ such that $Y_{i+1}=\ominus(Y_{i}\oplus Y_{i})\cup(Y_{i}\oplus Y_{i})$ for any $i=0, \ldots, n-1$. Let $Y_{n+1}=\ominus(Y_{n}\oplus Y_{n})\cup  (Y_{n}\oplus Y_{n})$. Clearly, the cardinality of each $Y_{n}$ is just $\alpha$. Put $G_{\alpha}=\bigcup_{n\in\mathbb{N}}Y_{n}$. Then $G_{\alpha}$ is a subgyrogroup of $G$ with the cardinality $\alpha$. Indeed, it is obvious that $|G_{\alpha}|=\alpha$. It suffices to prove that $G_{\alpha}$ is a subgyrogroup of $G$. By our construction of $G_{\alpha}$, we have $G_{\alpha}=\ominus G_{\alpha}$. Take any $x, y\in G_{\alpha}$. Then there exists $n\in \mathbb{N}$ such that $x, y\in Y_{n}$, hence $x\oplus y\in Y_{n+1}\subset G_{\alpha}$. Therefore, $G_{\alpha}$ is a subgyrogroup of $G$.
\end{proof}

Next we give some lemmas.

\begin{lemma}\label{yl1}
Let $G$ be a gyrogroup of cardinality $\kappa>\omega$. Then for each $\alpha <\kappa$ there are subsets $G_{\alpha}$ and $H_{\alpha}$ of $G$ with the following properties:

\smallskip
(1) $G_{\alpha}$ is a subgyrogroup of $G$ for all $\alpha<\kappa$;

\smallskip
(2) if $\alpha<\beta<\kappa$, we have $G_{\alpha}\subset G_{\beta}$ and $G_{\alpha}\not =G_{\beta}$;

\smallskip
(3) $|G_{\alpha}|=|\alpha|$ for all $\alpha<\kappa$;

\smallskip
(4) $G_{\alpha}=\bigcup \{H_{\upsilon}:\upsilon \leq \alpha\}$ for all $\alpha <\kappa$;

\smallskip
(5) $\bigcup \{H_{\alpha} :\alpha <\kappa\}=G$ and $H_{\alpha}\cap H_{\beta}=\emptyset$ if $\alpha \not =\beta$;

\smallskip
(6) if $g\in H_{\alpha}$ and $\alpha <\beta$, we have that $g\oplus H_{\beta}=H_{\beta}\oplus g=H_{\beta}$;

\smallskip
(7) $H_{\alpha}=\ominus H_{\alpha}$ for all $\alpha <\kappa$;

\smallskip
(8) if $A$ is a confinal subset of $\kappa$, the cardinality of $\bigcup \{H_{\alpha}:\alpha \in A\}$ is $\kappa$.

The family $\{H_{\alpha}:\alpha <\kappa\}$ is called a canonical decomposition of $G$.
\end{lemma}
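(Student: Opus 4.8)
The plan is to build the families $\{G_\alpha\}_{\alpha<\kappa}$ and $\{H_\alpha\}_{\alpha<\kappa}$ by transfinite recursion, along the lines of the analogous result for topological groups in \cite{APT}, using the generating procedure from the proof of Proposition~\ref{p0000} to control cardinalities. Fix an enumeration $G=\{g_\xi:\xi<\kappa\}$. Suppose subgyrogroups $G_\upsilon$ have been defined for all $\upsilon<\alpha$, where $\omega\le\alpha<\kappa$ (for finite $\alpha$ condition (3) is vacuous, so one makes there only the obvious trivial changes). Put $P_\alpha=\bigcup_{\upsilon<\alpha}G_\upsilon$; by the subgyrogroup criterion recalled after the definition of subgyrogroup, $P_\alpha$ is again a subgyrogroup, and $|P_\alpha|\le|\alpha|$. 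Choose $S_\alpha\subseteq G\setminus P_\alpha$ with $|S_\alpha|=|\alpha|$ and $g_\alpha\in S_\alpha\cup P_\alpha$ (possible since $|G\setminus P_\alpha|=\kappa$), and let $G_\alpha=\langle P_\alpha\cup S_\alpha\cup\{0\}\rangle$ be the subgyrogroup it generates. As in the proof of Proposition~\ref{p0000}---iterating $x\mapsto\ominus x$ and $(x,y)\mapsto x\oplus y$, which also produces the gyrations by Lemma~\ref{a}(4)---one gets $|G_\alpha|\le|\alpha|+\omega=|\alpha|$, while $S_\alpha\subseteq G_\alpha$ forces $|G_\alpha|=|\alpha|$, which is (3). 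Finally set $H_0=G_0$ and $H_\alpha=G_\alpha\setminus P_\alpha$.

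Conditions (1), (2) and (5) are then almost immediate. Each $G_\alpha$ is a subgyrogroup by construction; the chain is strictly increasing because $G_\alpha\subseteq P_{\alpha+1}$ while $S_{\alpha+1}\subseteq G_{\alpha+1}$ is disjoint from $P_{\alpha+1}\supseteq G_\alpha$, and transitivity gives (2); the $H_\alpha$ are pairwise disjoint by definition and cover $G$ because $g_\alpha\in G_{\alpha+1}$ gives $\bigcup_{\alpha<\kappa}G_\alpha=G$, each $g\in G$ landing in $H_\alpha$ for the least $\alpha$ with $g\in G_\alpha$. Condition (4) follows by induction on $\alpha$: the inductive hypothesis and a reindexing give $\bigcup\{H_\upsilon:\upsilon<\alpha\}=\bigcup\{G_\upsilon:\upsilon<\alpha\}=P_\alpha$, hence $\bigcup\{H_\upsilon:\upsilon\le\alpha\}=P_\alpha\cup(G_\alpha\setminus P_\alpha)=G_\alpha$, using $P_\alpha\subseteq G_\alpha$. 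For (7), observe that $x\mapsto\ominus x$ is an involutive bijection of $G$ fixing every subgyrogroup setwise, so $\ominus H_\alpha=\ominus G_\alpha\setminus\bigcup_{\upsilon<\alpha}\ominus G_\upsilon=G_\alpha\setminus P_\alpha=H_\alpha$.

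The main work is (6). Fix $g\in H_\alpha$ and $\beta>\alpha$, so $g\in G_\alpha\subseteq G_\upsilon$ for every $\upsilon\ge\alpha$. The left translation $L_g:x\mapsto g\oplus x$ and the right translation $R_g:x\mapsto x\oplus g$ are bijections of $G$ by the left and right cancellation laws of Lemma~\ref{a}; moreover, if $g$ lies in a subgyrogroup $K$, then $L_g$ and $R_g$ restrict to bijections of $K$, since $K$ is closed under $\oplus$ and $\ominus$ and each $\mbox{gyr}[x,g]$ with $x\in K$ restricts to an automorphism of $K$, so the inverse maps $c\mapsto\ominus g\oplus c$ and $c\mapsto c\oplus\mbox{gyr}[c,g](\ominus g)$ also send $K$ into $K$. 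Consequently $g\oplus G_\upsilon=G_\upsilon=G_\upsilon\oplus g$ for $\upsilon\ge\alpha$, while $g\oplus G_\upsilon\subseteq G_\alpha$ and $G_\upsilon\oplus g\subseteq G_\alpha$ for $\upsilon<\alpha$. Since $\alpha<\beta$ one has $G_\alpha\subseteq\bigcup_{\alpha\le\upsilon<\beta}G_\upsilon=P_\beta$, and splitting the union over $\upsilon<\beta$ into the ranges $\upsilon<\alpha$ and $\alpha\le\upsilon<\beta$ gives $g\oplus P_\beta=\bigcup_{\upsilon<\beta}(g\oplus G_\upsilon)=P_\beta$ and likewise $P_\beta\oplus g=P_\beta$. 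Applying the bijections $L_g$ and $R_g$ to $H_\beta=G_\beta\setminus P_\beta$, and using $g\oplus G_\beta=G_\beta=G_\beta\oplus g$, yields $g\oplus H_\beta=H_\beta=H_\beta\oplus g$, which is (6).

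For (8), note that $|H_\alpha|=|\alpha|$ for every infinite $\alpha$, since $S_\alpha\subseteq H_\alpha$ has cardinality $|\alpha|$ while $|H_\alpha|\le|G_\alpha|=|\alpha|$; this is exactly the reason a fresh $|\alpha|$-sized piece $S_\alpha$ is inserted at \emph{every} stage, including limits, for a recursion continuous at limit stages would make $H_\lambda$ empty and would break (8) when $\kappa$ is singular. If $A\subseteq\kappa$ is cofinal, then $\sup\{|\alpha|:\alpha\in A\}=\kappa$, and since $\{H_\alpha:\alpha\in A\}$ is a disjoint family by (5), $\left|\bigcup_{\alpha\in A}H_\alpha\right|=\sum_{\alpha\in A}|H_\alpha|\ge\sup_{\alpha\in A}|\alpha|=\kappa$, hence equals $\kappa$. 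I expect the principal obstacle to be the bookkeeping behind (6)---in particular the claim that right translations by $g\in G_\alpha$ preserve all the larger subgyrogroups, which, unlike in a group, must be traced through the right cancellation law and the action of gyrations on subgyrogroups---together with the point, already flagged, that the chain has to be kept ``fat'' at limit ordinals so that (8) survives for singular $\kappa$.
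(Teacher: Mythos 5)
Your construction follows the same skeleton as the paper's: a transfinite recursion producing a strictly increasing chain of subgyrogroups $G_{\alpha}$ with $|G_{\alpha}|=|\alpha|$, and $H_{\alpha}=G_{\alpha}\setminus\bigcup_{\upsilon<\alpha}G_{\upsilon}$. The one substantive difference is that the paper adjoins a single new generator $g_{\beta^{*}}$ at each stage, whereas you adjoin a fresh set $S_{\alpha}$ of $|\alpha|$ new elements, forcing $|H_{\alpha}|=|\alpha|$. This is a genuine improvement: the paper's verification of (8) silently replaces $\bigcup\{H_{\alpha}:\alpha\in A\}$ by the larger set $\bigcup\{G_{\alpha}:\alpha\in A\}$, and with only one generator per stage it is not clear that the individual $H_{\alpha}$ are large enough for (8) to survive when $\kappa$ is singular --- exactly the point you flag. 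Your more careful treatment of (6), tracing the bijectivity of right translations on subgyrogroups through the right cancellation law, Lemma~\ref{a}(3) and the gyration-invariance built into the definition of subgyrogroup, is also welcome; the paper simply asserts $g\oplus G_{\gamma}=G_{\gamma}\oplus g=G_{\gamma}$. One small repair is needed in your (8): the equality $\sup\{|\alpha|:\alpha\in A\}=\kappa$ fails when $\kappa$ is a successor cardinal (e.g.\ $\kappa=\omega_{1}$, where every $\alpha<\kappa$ is countable). In that case $\kappa$ is regular, so $|A|=\kappa$ and the disjointness and nonemptiness of the $H_{\alpha}$ already give $|\bigcup_{\alpha\in A}H_{\alpha}|\geq|A|=\kappa$; combining the two cases (equivalently, using $\sum_{\alpha\in A}|H_{\alpha}|=|A|\cdot\sup_{\alpha\in A}|H_{\alpha}|$) completes (8) for all non-countable $\kappa$.
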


\begin{proof}
Since $G$ is a gyrogroup of cardinality $\kappa$, let $G=\{g_{\alpha}:\alpha <\kappa\}$, where $g_{0}=0$ and $g_{\alpha}\not =g_{\beta}$ if $\alpha \not =\beta$. Let $G_{0}=\langle \{g_{0}\}\rangle$. Suppose that $\beta <\kappa$ and that for every $\alpha <\beta$ we have constructed a subgyrogroup $G_{\alpha}$ of $G$ which has the following properties:

\smallskip
(i) $G_{\alpha}\subset G_{\gamma}$ and $G_{\alpha}\not =G_{\gamma}$ if $\alpha <\gamma <\beta$;

\smallskip
(ii) $|G_{\alpha}|=|\alpha|$ for all $\alpha <\beta$;

\smallskip
(iii) $\{g_{\gamma}:\gamma <\alpha\}\subset G_{\alpha}$ for every $\alpha <\beta$.

\smallskip
Let $B_{\beta}=\bigcup \{G_{\alpha}:\alpha <\beta\}$. It follows from (ii) that $B_{\beta}\not =G$, hence there exists $$\beta ^{*}=\min\{\alpha <\kappa:g_{\alpha}\not \in B_{\beta}\}.$$ Set $G_{\beta}=\langle B_{\beta}\cup \{g_{\beta ^{*}}\}\rangle$. Therefore, by induction, we can obtain that the family $\{G_{\alpha}: \alpha<\kappa\}$ satisfies (i)-(iii) as well as the property $\bigcup\{G_{\alpha}: \alpha<\kappa\}$. For every $\alpha <\kappa$, let $H_{\alpha}=G_{\alpha}\setminus \bigcup \{G_{\beta}:\beta <\alpha\}$.

Obviously, the sets $G_{\alpha}$ and $H_{\alpha}$ satisfy (1)-(5) and (7). To see that (6) holds. Assume that $g\in H_{\alpha}$ and $\alpha<\beta$. Clearly, $g\in G_{\alpha}$ and $G_{\alpha}$ is a subgyrogroup of $G_{\gamma}$ for each $\alpha \leq \gamma \leq \beta$. Therefore, $g\oplus G_{\gamma}=G_{\gamma}\oplus g=G_{\gamma}$ for all $\gamma$, then
\begin{eqnarray}
g\oplus H_{\beta}&=&g\oplus (G_{\beta}\setminus\bigcup \{G_{\alpha}:\alpha <\beta\})\nonumber\\
&=&(g\oplus G_{\beta})\setminus\bigcup \{g\oplus G_{\alpha}:\alpha <\beta\}\nonumber\\
&=&G_{\beta}\setminus \bigcup \{G_{\alpha}:\alpha <\beta\}\nonumber\\
&=&H_{\beta}.\nonumber
\end{eqnarray}
Similar, $H_{\beta}\oplus g=H_{\beta}$, thus (6) holds. Finally, take any cofinal $A\subset \kappa$. It follows from $|H_{\alpha +1}|=|G_{\alpha}|=|\alpha|$ that $$|\bigcup \{H_{\alpha}:\alpha \in A\}|=|\bigcup \{G_{\alpha}:\alpha \in A\}|=|G|=\kappa.$$
\end{proof}

Let $G$ be a gyrogroup, and let $\tau$ be a topology on $G$. A {\it left topological gyrogroup} consists of a gyrogroup $G$ and a topology $\tau$ on the set $G$ such that for all $g\in G$, the left action $l_{g}: G\rightarrow G$, $x\mapsto g\oplus x$, is a continuous mapping of the space $G$ to itself. Similarly, we can define the concept of {\it right topological gyrogroups}. Clearly, each topological gyrogroup is not only a left topological gyrogroup but also a right topological gyrogroup.

Let $G$ be a gyrogroup of cardinality $\kappa>\omega$, and let $\{H_{\alpha}:\alpha <\kappa\}$ be a canonical decomposition of $G$. Then for each $A\subset\kappa$ put $H_{A}=\bigcup \{H_{\alpha}:\alpha \in A\}$.

\begin{lemma}\label{yl2}
Suppose that $(G,\tau,\oplus)$ is a non-discrete irresolvable left (or right) topological gyrogroup such that $|G|=\Delta (G,\tau,\oplus)=\kappa>\omega$, and suppose that $\{H_{\alpha}:\alpha <\kappa\}$ is a canonical decomposition of $G$. Then the following statements hold:

\smallskip
(1) For each subset $A\subset \kappa$ and any $g, h\in G$, the set $\left(h\oplus(g\oplus H_{A})\right)\setminus H_{A}$ has cardinality less than $\kappa$.

 \smallskip
(2) The family $\xi =\{A\subset \kappa :\mbox{Int}(H_{A})\not =\emptyset\}$ is a free ultrafilter on $\kappa$.
\end{lemma}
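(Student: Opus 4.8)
The plan is to establish (1) by a purely algebraic computation with the canonical decomposition, and then to derive (2) from (1) together with the homogeneity of $G$ and the hypotheses $|G|=\Delta(G,\tau,\oplus)=\kappa$ and irresolvability. I will argue for a left topological gyrogroup; the right case is symmetric, using $H_\beta\oplus g=H_\beta$ from item~(6) of Lemma~\ref{yl1} in place of $g\oplus H_\beta=H_\beta$. Two remarks used throughout: each left translation $l_g\colon x\mapsto g\oplus x$ is a bijection of $G$ by the left cancellation law of Lemma~\ref{a}, and in the topological setting a homeomorphism (its inverse is the continuous map $l_{\ominus g}$); moreover, by item~(3) of Lemma~\ref{a}, for any $a,b\in G$ there is $g$ with $g\oplus a=b$, so $G$ is homogeneous.

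For (1), I would fix $A\subseteq\kappa$ and $g,h\in G$, choose $\gamma<\kappa$ with $g,h\in G_\gamma$, and (by item~(4) of Lemma~\ref{yl1}) fix $\alpha_0,\alpha_1\le\gamma$ with $g\in H_{\alpha_0}$, $h\in H_{\alpha_1}$. Splitting $A=A_1\cup A_2$ with $A_1=\{\alpha\in A:\alpha\le\gamma\}$ and $A_2=\{\alpha\in A:\alpha>\gamma\}$, one has $H_{A_1}\subseteq G_\gamma$, so $|H_{A_1}|\le|G_\gamma|=|\gamma|<\kappa$ by item~(3); and for $\alpha\in A_2$, since $\alpha>\gamma\ge\alpha_0,\alpha_1$, item~(6) gives $g\oplus H_\alpha=H_\alpha=h\oplus H_\alpha$, whence $h\oplus(g\oplus H_{A_2})=H_{A_2}$. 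Because $l_g,l_h$ are bijections, $h\oplus(g\oplus H_A)=\bigl(h\oplus(g\oplus H_{A_1})\bigr)\cup H_{A_2}$, and as $H_{A_2}\subseteq H_A$ this yields $\bigl(h\oplus(g\oplus H_A)\bigr)\setminus H_A\subseteq h\oplus(g\oplus H_{A_1})$, of cardinality $|H_{A_1}|<\kappa$. (No topology or irresolvability is used here.)

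The heart of (2) — and the step I expect to be the main obstacle — is the assertion that every $S\subseteq G$ with $|S|<\kappa$ is nowhere dense in $G$. Since $\Delta(G,\tau,\oplus)=\kappa$, each nonempty open set has cardinality $\ge\kappa$, so $S$ already has empty interior and one must only rule out $W:=\mbox{Int}(\overline S)\ne\emptyset$. Assuming $W\ne\emptyset$, I would take (Zorn) a maximal family $\mathcal O$ of pairwise disjoint nonempty open sets of the form $g\oplus W'$ with $W'$ a nonempty open subset of $W$; a shrink-and-translate argument using homogeneity shows $Y:=\bigcup\mathcal O$ is dense in $G$. As $S\cap W$ is dense in $W$, each $g\oplus(S\cap W')$ is dense in its piece $g\oplus W'$, so the union $T$ of all these sets is dense in $Y$, hence in $G$; by irresolvability, $\mbox{Int}(T)\ne\emptyset$. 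But any nonempty open $O\subseteq T$ meets some piece $g\oplus W'\in\mathcal O$, and disjointness of $\mathcal O$ forces $O\cap(g\oplus W')\subseteq g\oplus(S\cap W')$, a set of cardinality $\le|S|<\kappa$, contradicting $|O\cap(g\oplus W')|\ge\kappa$. (Equivalently, this says $G$ is open-hereditarily irresolvable — the only genuinely topological input; once it is available the rest of (2) is routine.)

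Granting this, (2) follows. The family $\xi$ is clearly upward closed, contains $\kappa$ (as $H_\kappa=G$) and omits $\emptyset$. For closure under finite intersections, given $A,B\in\xi$ I would pick $u\in U:=\mbox{Int}(H_A)$, $v\in V:=\mbox{Int}(H_B)$, choose $g$ with $g\oplus v=u$, and set $W_0:=U\cap(g\oplus V)$; this is a nonempty open set with $W_0\subseteq H_A$ and, by (1) with $h=0$, $W_0\subseteq g\oplus H_B\subseteq H_B\cup S$ for some $S$ with $|S|<\kappa$. Hence $W_0\setminus S\subseteq H_A\cap H_B=H_{A\cap B}$, and since $S$ is nowhere dense the nonempty open set $W_0\setminus\overline S$ lies in $H_{A\cap B}$, so $A\cap B\in\xi$. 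For the ultrafilter property: if $A\notin\xi$ then $G\setminus H_A=H_{\kappa\setminus A}$ is dense, hence has nonempty interior by irresolvability, so $\kappa\setminus A\in\xi$; and $\xi$, being a proper filter, cannot contain both $A$ and $\kappa\setminus A$. Finally $\xi$ is free: each $H_\alpha$ has cardinality $|\alpha|<\kappa$ and so is nowhere dense, making $G\setminus H_\alpha=H_{\kappa\setminus\{\alpha\}}$ dense, so $\kappa\setminus\{\alpha\}\in\xi$ for all $\alpha<\kappa$; being a filter, $\xi$ then contains every cofinite subset of $\kappa$.
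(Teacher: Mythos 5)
Your proof of part (1) is correct and is essentially the paper's own argument (split $A$ at a level $\gamma$ dominating both $g$ and $h$, use item (6) of Lemma \ref{yl1} above that level and $|G_\gamma|<\kappa$ below it). For part (2) you take a genuinely different route. The paper's key step is the implication ``$\mbox{Int}(H_A)\neq\emptyset$ implies $H_A$ (indeed $\mbox{Int}(H_A)$) is dense'': assuming a nonempty open $V$ missing $U=\mbox{Int}(H_A)$, it builds the open set $W=(y\oplus((\ominus x)\oplus U))\cap V$, uses part (1) to see $W$ is the union of two disjoint dense-in-$W$ sets with empty interior, and then invokes the Comfort--Feng theorem \cite{CL} that a union of resolvable spaces is resolvable (covering $G$ by left translates of $W$) to contradict irresolvability; the ultra and free properties then follow, with the filter axioms left implicit (they do follow, since $A\in\xi$ iff $H_A$ contains a dense open set). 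Your key step is instead the statement that every $S\subseteq G$ with $|S|<\kappa$ is nowhere dense, proved by saturating $G$ with a maximal disjoint family of translated pieces of $\mbox{Int}(\overline{S})$ and contradicting $\Delta(G)=\kappa$ directly via the fact that in an irresolvable space every dense set has nonempty interior; you then verify the filter, ultrafilter and freeness axioms one by one from this and from part (1). Both arguments are sound; yours is more self-contained (no appeal to \cite{CL}) and makes the closure-under-intersection step explicit, at the cost of being somewhat longer, while the paper's yields the stronger intermediate fact that $\mbox{Int}(H_A)$ is dense whenever it is nonempty. One small point to keep in mind: for the ``right'' case you should check (or at least note, as the paper implicitly does) that right translations are open maps, since their inverses are not themselves right translations in a gyrogroup.
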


\begin{proof}
(1) Since $g, h\in G$, there exists $\alpha, \beta<\kappa$ such that $$g\in H_{\alpha}\subset G_{\alpha}=\bigcup \{H_{\upsilon}:\upsilon \leq \alpha\}$$ and $h\in H_{\beta}\subset G_{\beta}=\bigcup \{H_{\upsilon}:\upsilon \leq \beta\}$. If $\alpha <\gamma$ ($\beta<\gamma$), it follows from Lemma \ref{yl1} that $g\oplus H_{\gamma}=H_{\gamma}$ ($h\oplus H_{\gamma}=H_{\gamma}$). If $\alpha\geq\beta$, then $$g\oplus H_{A}\subset G_{\alpha}\cup\{H_{\nu}: \nu>\alpha, \nu\in A\}.$$ Hence $$h\oplus (g\oplus H_{A})\subset G_{\alpha}\cup\{H_{\nu}: \nu>\alpha, \nu\in A\},$$ then $\left(h\oplus (g\oplus H_{A})\right)\setminus H_{A}\subset G_{\alpha}$. If $\alpha<\beta$, then it also easily see that $$h\oplus (g\oplus H_{A})\subset G_{\beta}\cup\{H_{\nu}: \nu>\beta, \nu\in A\}.$$ Hence $$\left(h\oplus (g\oplus H_{A})\right)\setminus H_{A}\subset G_{\beta}.$$ Therefore, it follows from Lemma \ref{yl1} that$\left(h\oplus(g\oplus H_{A})\right)\setminus H_{A}$ has cardinality less than $\kappa$.

\smallskip
(2) It is clear that $H_{A}$ and $H_{\kappa \setminus A}$ are disjoint and $G=H_{A}\cup H_{\kappa \setminus A}$. Hence one of the sets $H_{A}$ or $H_{\kappa \setminus A}$ has non-empty interior as $G$ is irresolvable. Therefore, $A\in \xi$ or $\kappa \setminus A\in \xi$. Indeed, exactly one of the sets $A$ and $\kappa \setminus A$ belongs to $\xi$. Suppose not, then both $A$ and $\kappa \setminus A$ belong to $\xi$. In order to obtain a contradiction, it suffices to prove that $H_{A}$ and $H_{\kappa \setminus A}$ are dense in $G$. Indeed, we need only to consider the case of $H_{A}$.

Clearly, $U=\mbox{Int}(H_{A})\not =\emptyset$. If $U$ is not dense in $G$, there exists a non-empty open set $V\subset G$ such that $V\cap U=\emptyset$. For arbitrary $x\in U$ and $y\in V$, set $W=(y\oplus((\ominus x)\oplus U))\cap V$. Obviously, $W$ is open, non-empty and $|W|=\kappa$.

By (1), we see that $|W\setminus H_{A}|<\kappa$, hence $\mbox{Int}(W\setminus H_{A})=\emptyset$ in $G$ by our assumption. By the definitions of $W$ and $U$, it follows that $$W=(W\setminus H_{A})\cup (W\cap H_{A})\subset (W\setminus H_{A})\cup (H_{A}\setminus U),$$ where $W\setminus H_{A}$ and $H_{A}\setminus U$ are disjoint and both of them have empty interior and dense in $W$. Therefore, $W$ is resolvable. The gyrogroup $G$ can be covered by the all possible left translations of $W$, so $G$ is resolvable by \cite{CL} which proves that the union of resolvable spaces is also resolvable. This is a contradiction.

Therefore, if $U=\mbox{Int}(H_{A})\not =\emptyset$, then it follows that $U$ has to be dense in $G$. So $A$ or $\kappa \setminus A$ belong to $\xi$ and $\xi$ is an ultrafilter on $\kappa$. Further, $\xi$ is a free ultrafilter since each $H_{\alpha}$ does not belongs to $\xi$ by (5) of Lemma \ref{yl1}.
\end{proof}

\begin{theorem}\label{3dl1}
Suppose that $(G, \tau, \oplus)$ is a non-discrete irresolvable left (or right) topological gyrogroup of non-measurable cardinality such that $\Delta (G,\tau,\oplus)=\kappa$. If $\{H_{\alpha}:\alpha <\kappa\}$ is a canonical decomposition of $G$, then there is a family $\{A_{n}:n\in \mathbb{N}\}$ of subsets of $\kappa$ such that:

\smallskip
(1) $\bigcup \{A_{n}:n\in \mathbb{N}\}=\kappa$;

\smallskip
(2) every set $H_{A_{n}}=\bigcup \{H_{\alpha}:\alpha \in A_{n}\}$ is closed and nowhere dense in $G$;

\smallskip
(3) $\bigcup \{H_{A_{n}}:n\in \mathbb{N}\}=G$.

\smallskip
In particular, $(G,\tau,\oplus)$ is of first category.
\end{theorem}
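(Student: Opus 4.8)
\section*{Proof proposal}

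The plan is to transplant the Baire-category argument of \cite{APT}. Since $G$ carries a canonical decomposition $\{H_{\alpha}:\alpha<\kappa\}$, Lemma~\ref{yl1} forces $|G|=\kappa$, and with the hypothesis $\Delta(G,\tau,\oplus)=\kappa$ we get $|G|=\Delta(G,\tau,\oplus)=\kappa$, so Lemma~\ref{yl2} applies: the family $\xi=\{A\subseteq\kappa:\mathrm{Int}(H_{A})\neq\emptyset\}$ is a free ultrafilter on $\kappa$. Moreover, inspecting the proof of Lemma~\ref{yl2}(2), for any $A$ with $\mathrm{Int}(H_{A})\neq\emptyset$ this interior is in fact dense in $G$; in particular $H_{A}$ is dense whenever $A\in\xi$.

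The first step is to use non-measurability of $\kappa$. A countably complete free ultrafilter on a set of cardinality $\kappa$ is exactly a nontrivial countably additive two-valued measure on all its subsets vanishing on singletons, which a non-measurable $\kappa$ (\cite{E}) forbids; hence $\xi$ is not countably complete. So there are $C_{1}\supseteq C_{2}\supseteq\cdots$ in $\xi$ with $\bigcap_{k}C_{k}\notin\xi$, and replacing each $C_{k}$ by $C_{k}\setminus\bigcap_{m}C_{m}$ we may assume $\bigcap_{k}C_{k}=\emptyset$. Then $B_{0}=\kappa\setminus C_{1}$ and $B_{k}=C_{k}\setminus C_{k+1}$ for $k\geq 1$ form a partition of $\kappa$, and each $B_{k}$ lies in the complement of some member of $\xi$, so $B_{k}\notin\xi$. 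I will show the sets $A_{k}:=B_{k}$ satisfy $(1)$--$(3)$.

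Fix $k$. For nowhere-density: if some nonempty open $W\subseteq\overline{H_{B_{k}}}$, then $W\cap H_{B_{k}}$ is dense in $W$ because $W\subseteq\overline{H_{B_{k}}}$, while $W\cap H_{\kappa\setminus B_{k}}$ is dense in $W$ because $\kappa\setminus B_{k}\in\xi$ makes $H_{\kappa\setminus B_{k}}$ dense in $G$; since these two sets partition $W$, $W$ is resolvable, and as the left (resp.\ right) translates of $W$ cover $G$, \cite{CL} would make $G$ resolvable --- a contradiction. For closedness one must show $H_{\kappa\setminus B_{k}}$ is open, equivalently $\overline{H_{B_{k}}}=H_{B_{k}}$. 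The idea is: assume $x\in\overline{H_{B_{k}}}\setminus H_{B_{k}}$, so $x\in H_{\gamma}$ for some $\gamma\notin B_{k}$, and produce an open neighbourhood of $x$ contained in $H_{\kappa\setminus B_{k}}$ (hence disjoint from $H_{B_{k}}$). Starting from the dense open set $V=\mathrm{Int}(H_{\kappa\setminus B_{k}})$ and using the sharp form of Lemma~\ref{yl2}(1) --- namely that $\bigl(h\oplus(g\oplus H_{A})\bigr)\setminus H_{A}$ is contained in the subgyrogroup $G_{\max(\mu,\nu)}$ when $g\in H_{\mu}$, $h\in H_{\nu}$ --- together with Lemma~\ref{yl1}(6): if $g\in H_{\mu}$ with $\mu<\min B_{k}$, then $g\oplus H_{\kappa\setminus B_{k}}$ differs from $H_{\kappa\setminus B_{k}}$ only inside $G_{\mu}$, and $G_{\mu}\cap H_{B_{k}}=\emptyset$, so $l_{g}\bigl(V\setminus\overline{G_{\mu}}\bigr)$ (a nonempty open set, as $\overline{G_{\mu}}$ is closed and nowhere dense) is again an open subset of $H_{\kappa\setminus B_{k}}$. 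Sweeping $x$ out by such translates handles the case $\gamma<\min B_{k}$; the remaining case $\gamma\geq\min B_{k}$ is reduced to this one by a subsidiary argument (an induction on $\gamma$, again exploiting the bounded-error form of Lemma~\ref{yl2}(1)), yielding the contradiction.

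Granting both properties, $\bigcup_{k}A_{k}=\bigcup_{k}B_{k}=\kappa$ gives $\bigcup_{k}H_{A_{k}}=G$, so $G$ is a countable union of closed nowhere dense sets and is therefore of first category. The main obstacle is precisely the closedness asserted in $(2)$: items $(1)$ and $(3)$ and nowhere-density --- hence also the ``first category'' conclusion --- follow cleanly from the partition and the resolvability argument, but upgrading each $H_{B_{k}}$ to a genuinely closed set requires the homogeneity analysis above, whose truly delicate point is controlling how $H_{B_{k}}$ can accumulate at the layers $H_{\gamma}$ with $\gamma\geq\min B_{k}$; this is where the finer, bounded-error version of Lemma~\ref{yl2}(1) and the invariance relations in Lemma~\ref{yl1} become essential.
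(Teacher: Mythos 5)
Your overall route is the paper's route: invoke Lemma~\ref{yl2} to get the free ultrafilter $\xi=\{A\subset\kappa:\mathrm{Int}(H_{A})\neq\emptyset\}$, use non-measurability of $\kappa$ to extract countably many members of $\xi$ with empty intersection, and take the complements as the $A_{n}$. The paper does this directly (taking $A_{n}=\kappa\setminus B_{n}$ without arranging disjointness) and concludes nowhere density from $A_{n}\notin\xi$ together with the fact, established inside the proof of Lemma~\ref{yl2}(2), that $\mathrm{Int}(H_{B_{n}})$ is dense; your resolvability argument for nowhere density is just a local replay of that same step. So items (1), (3), nowhere density, and the first-category conclusion are fine and essentially identical to the paper.

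The one genuine gap is exactly where you flag it: the closedness of $H_{A_{n}}$. What you offer there is not a proof. The ``sweeping out by translates'' step, even granting the bounded-error form of Lemma~\ref{yl2}(1), only produces dense open sets $l_{g}(V)$ contained in $H_{\kappa\setminus B_{k}}$; that re-proves that $H_{B_{k}}$ is contained in a closed nowhere dense set, but it does not show that \emph{every} point of $H_{\kappa\setminus B_{k}}$ (in particular a point of $H_{\gamma}$ with $\gamma\geq\min B_{k}$ lying outside all such translates) has a neighbourhood missing $H_{B_{k}}$, which is what closedness requires. The promised ``subsidiary argument (an induction on $\gamma$)'' is never carried out, and it is not clear it can be, for an arbitrary irresolvable left topological gyrogroup. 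You should be aware, however, that the paper's own proof simply does not address closedness either: it only verifies $\mathrm{Int}(H_{A_{n}})=\emptyset$. The statement survives in all of its later uses because (a) for the first-category conclusion one may replace each $H_{A_{n}}$ by its closure, which is still nowhere dense, and (b) the closed-and-discrete form is only ever applied to submaximal gyrogroups, where every nowhere dense set is automatically closed and discrete. The clean fix is therefore not your homogeneity analysis but simply to prove nowhere density and then appeal to one of these two observations, depending on the intended application.
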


\begin{proof}
It follows from Lemma \ref{yl2} that the family $\xi =\{A\subset \kappa: \mbox{Int}(H_{A})\not =\emptyset\}$ is a free ultrafilter on $\kappa$. Since $\kappa$ is a non-measurable cardinal, there is a family $\{B_{n}:n\in \mathbb{N}\}$ such that $B_{n}\in \xi$ for every $n$ and $\bigcap \{B_{n}:n\in \mathbb{N}\}=\emptyset$. Then $\bigcap \{H_{B_{n}}:n\in \mathbb{N}\}=\emptyset$, thus $\bigcup \{H_{\kappa \setminus B_{n}}:n\in \mathbb{N}\}=G$. Set $A_{n}=\kappa \setminus B_{n}$. Since $A_{n}\not \in \xi$ for every $n\in \mathbb{N}$, it follows that $\mbox{Int}(H_{A_{n}})=\emptyset$. Above all, we know that $\{H_{A_{n}}:n\in \mathbb{N}\}$ is a family of nowhere dense sets whose union covers $G$. The proof is completed.
\end{proof}

\begin{corollary}
If $(G, \tau, \oplus)$ is a non-discrete irresolvable left topological gyrogroup (or right topological gyrogroup) of non-measurable cardinality, then $(G,\tau,\oplus)$ is of first-category.
\end{corollary}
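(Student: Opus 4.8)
The plan is to deduce the corollary from Theorem~\ref{3dl1}, whose statement already contains ``$(G,\tau,\oplus)$ is of first category'' as its last assertion; the work lies entirely in matching the hypotheses, since the corollary does not presuppose $\Delta(G,\tau,\oplus)=|G|$ nor the availability of a canonical decomposition indexed by $\Delta(G)$. The first point I would record is that a left topological gyrogroup is a homogeneous space: for each $g\in G$ the left translation $l_g\colon x\mapsto g\oplus x$ is a bijection whose inverse is $l_{\ominus g}$, by the left cancellation law in Lemma~\ref{a}(1), and both are continuous, so $l_g$ is a homeomorphism. Hence the dispersion character $\Delta(x,G)$ of Definition~\ref{d00} is the same at every point, $\Delta(G,\tau,\oplus)=\Delta(x,G)$ for all $x$, and since $G$ is dense in itself we get $\kappa:=\Delta(G,\tau,\oplus)\ge\omega$.

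The second preliminary is that $G$ is \emph{open-hereditarily irresolvable}: every nonempty open subspace of $G$ is irresolvable. To see this, let $W$ be the union of all open resolvable subspaces of $G$. By the result of \cite{CL} that a union of resolvable spaces is resolvable, $W$ is resolvable; and since every $l_g$ is a homeomorphism, $g\oplus W=W$ for all $g\in G$. If $W\neq\emptyset$, take $w\in W$: then $0=(\ominus w)\oplus w\in(\ominus w)\oplus W=W$, whence $g=g\oplus 0\in g\oplus W=W$ for every $g\in G$, i.e.\ $W=G$ and $G$ is resolvable, contradicting the hypothesis. So $W=\emptyset$, which is exactly the claim.

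I would then split into cases according to $|G|$. If $|G|=\kappa>\omega$, Lemma~\ref{yl1} furnishes a canonical decomposition $\{H_\alpha:\alpha<\kappa\}$ of $G$ and Theorem~\ref{3dl1} applies directly: it yields a countable family $\{H_{A_n}:n\in\mathbb N\}$ of closed nowhere dense subsets of $G$ with $\bigcup_{n}H_{A_n}=G$, which is precisely the statement that $G$ is of first category. If $|G|=\omega$, then $G$ is a countable Hausdorff space dense in itself, so $G=\bigcup_{x\in G}\{x\}$ is a countable union of singletons, each of which is closed (Hausdorff) and of empty interior (no point is isolated), hence nowhere dense; so $G$ is of first category. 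If $|G|>\kappa$, I would fix an open $U\ni 0$ with $|U|=\kappa$ and pass to $H=\langle U\rangle$; this is an open subgyrogroup of $G$ (indeed $H=\bigcup_{h\in H}(h\oplus U)$, a union of translates of the open set $U$, using $0\in U$), with $|H|=\kappa$ since $\kappa$ is infinite, it is irresolvable by open-hereditary irresolvability, non-discrete (it is dense in itself, being open in $G$), of non-measurable cardinality, and satisfies $\Delta(H)=|H|=\kappa$; so one of the previous two cases applies to $H$ and gives $H=\bigcup_n N_n$ with every $N_n$ closed and nowhere dense in $H$, hence also nowhere dense in $G$.

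The step I expect to be the main obstacle is propagating this countable cover of the open subgyrogroup $H$ to all of $G$ when $|G|>\kappa$. Translating $\{N_n\}$ by coset representatives does not suffice, because in that case $G$ is not a union of countably many cosets of $H$; what one really wants is a closed nowhere dense subgyrogroup $K\le G$ with $|G/K|\le\omega$, so that $G$ becomes a countable union of translates of $K$, each closed and nowhere dense, and hence of first category. For product-type examples $G\cong G_0\times D$ with $D$ discrete one simply takes $K=\{0\}\times D$; producing such a $K$ inside an arbitrary non-discrete irresolvable $G$ with $|G|>\Delta(G)$ is where the structure theory behind Lemmas~\ref{yl1} and \ref{yl2} would be needed rather than a soft argument. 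If, however, the corollary is intended only in the setting $\Delta(G)=|G|$ tacitly implied by the phrase ``a canonical decomposition of $G$'' in Theorem~\ref{3dl1}, then its proof is nothing more than the final clause of that theorem.
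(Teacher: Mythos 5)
Your reduction is essentially the paper's: the paper also sets $\kappa=\Delta(G,\tau,\oplus)$, disposes of $\kappa=\omega$ as obvious, and for $\kappa>\omega$ passes to the open subgyrogroup $G_{0}=\langle U\rangle$ generated by an open neighborhood $U$ of $0$ with $|U|=\kappa$, so that $\Delta(G_{0})=|G_{0}|=\kappa$ and Theorem~\ref{3dl1} applies to $G_{0}$. Your explicit verification that every nonempty open subspace of $G$ is irresolvable (the union $W$ of all open resolvable subspaces is resolvable by the Comfort--Feng theorem and is invariant under all left translations, hence empty) is a point the paper leaves implicit; it is correct and genuinely needed, since open subspaces of irresolvable spaces need not be irresolvable without homogeneity.

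However, you stop short of a proof: in the case $|G|>\kappa$ you declare the passage from ``$H$ is of first category'' to ``$G$ is of first category'' an obstacle you cannot overcome, and you speculate that the corollary may tacitly assume $\Delta(G)=|G|$. That is a genuine gap, and the speculation is wrong: the statement is meant, and is true, in full generality. The missing ingredient is the Banach category theorem (the localization principle for category): in an arbitrary topological space, the union of any family of open sets each of first category is of first category. Here $G=\bigcup_{g\in G}(g\oplus H)$; each $g\oplus H$ is open (it contains $g$ because $0\in H$) and homeomorphic to $H$ via the left translation $l_{g}$, hence of first category in itself and therefore, being open, of first category in $G$; Banach's theorem then yields that $G$ is of first category. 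No countable coset decomposition and no closed nowhere dense subgyrogroup $K$ with $|G/K|\leq\omega$ is required. This is exactly parallel to the step already used in the proof of Lemma~\ref{yl2}, where $G$ is covered by all left translates of a resolvable open set and the Comfort--Feng result on unions of resolvable spaces is invoked; the category analogue of that localization is what closes your argument. (The paper's own proof is equally terse at this final step, but this is the standard justification behind its ``Therefore, $G$ is of first category.'')
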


\begin{proof}
Let $\Delta (G,\tau,\oplus)=\kappa$. If $\kappa =\omega$, it is obvious.

Suppose that $\kappa >\omega$. There exists an open neighborhood $U$ of $0$ such that $|U|=\kappa$. Then the gyrogroup $G_{0}=\langle U\rangle$ is open in $G$ and the dispersion character of $G_{0}$ coincides with its power. However, it follows from Theorem \ref{3dl1} that $G_{0}$ is of first category. Therefore, $G$ is of first category.
\end{proof}

\begin{corollary}
Every non-discrete irresolvable topological gyrogroup of non-measurable cardinality is of first category.
\end{corollary}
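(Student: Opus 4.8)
The plan is essentially to invoke the preceding corollary after a one-line observation. First I would note that a topological gyrogroup is, in particular, a left topological gyrogroup: by the definition of topological gyrogroup the operation $\oplus\colon G\times G\to G$ is jointly continuous, so for each fixed $g\in G$ the left translation $l_{g}\colon x\mapsto g\oplus x$ is continuous, being the restriction of $\oplus$ to the subspace $\{g\}\times G\cong G$. (This was already recorded in the text just before Lemma~\ref{yl2}.) The hypotheses in the statement --- non-discrete, irresolvable, and of non-measurable cardinality --- are all properties of the underlying topological space and the set $G$, so they are inherited verbatim by $(G,\tau,\oplus)$ viewed as a left topological gyrogroup.

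Having done this, I would simply apply the previous corollary to $(G,\tau,\oplus)$ to conclude that it is of first category, which is exactly the claim. So the proof is a direct specialization; no new argument is needed.

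If one preferred a self-contained route rather than citing the corollary, I would split on $\kappa=\Delta(G,\tau,\oplus)$: when $\kappa=\omega$ the conclusion is immediate, and when $\kappa>\omega$ one takes an open neighbourhood $U$ of $0$ with $|U|=\kappa$, passes to the open subgyrogroup $G_{0}=\langle U\rangle$ (whose dispersion character equals its cardinality), applies Theorem~\ref{3dl1} to $G_{0}$ to cover it by countably many nowhere dense sets, and then covers $G$ by left translations of those sets. But this just reproduces the proof of the previous corollary, so reusing that corollary is cleaner. There is really no obstacle here; the only point that must be stated with any care is why joint continuity of $\oplus$ forces each left (and right) translation to be continuous, which is the elementary remark above.
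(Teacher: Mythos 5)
Your proposal is correct and matches the paper exactly: the paper states this corollary with no proof, treating it as an immediate specialization of the preceding corollary via the remark that every topological gyrogroup is in particular a left (and right) topological gyrogroup. Your optional self-contained route is likewise just the proof of that preceding corollary, so nothing further is needed.
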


A {\it (strongly) $\sigma$-discrete} space is one which is a countable union of (closed) discrete subspaces.

\begin{corollary}
Every submaximal topological gyrogroup of non-measurable cardinality is strongly $\sigma$-discrete.
\end{corollary}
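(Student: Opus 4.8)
The plan is to reduce the statement to the preceding corollary about first-category spaces, exactly paralleling the strategy used in \cite{APT} for topological groups. First I would recall the key topological fact, due to Arhangel'ski\v\i{} and Collins, that a space $X$ is submaximal if and only if $X$ is a countable union of closed discrete subspaces precisely when $X$ has no dense-in-itself subspace of a certain kind; more directly, I would use that every submaximal space is irresolvable (indeed hereditarily irresolvable), since if $X$ were the union of two disjoint dense subsets $D_1, D_2$, then $D_1$ would be open and dense but its complement $D_2$ would also be dense, contradicting that $D_1$ is dense with $\overline{D_1}=X$ yet $X\setminus D_1 = D_2$ nonempty and dense, forcing $D_1$ not open. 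So a submaximal topological gyrogroup is irresolvable.

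Next I would distinguish two cases according to whether $G$ has isolated points. Recall that throughout the paper spaces are assumed dense in themselves, but submaximality is most interesting on the dense-in-themselves core; in any case, for the topological gyrogroup $G$ either $G$ is discrete — in which case it is trivially strongly $\sigma$-discrete (a single closed discrete piece) — or $G$ is non-discrete. In the non-discrete case, $G$ is a non-discrete irresolvable topological gyrogroup of non-measurable cardinality, so by the Corollary just proved, $G$ is of first category in itself: $G=\bigcup_{n\in\mathbb N} F_n$ with each $F_n$ closed and nowhere dense. The crucial step is then to upgrade "nowhere dense" to "discrete." Here I would invoke submaximality again: in a submaximal space every nowhere dense subset is closed and discrete. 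Indeed, if $F$ is nowhere dense, then $X\setminus F$ is dense, hence open by submaximality, so $F$ is closed; moreover every subset of a nowhere dense set in a submaximal space is closed (since its complement contains the dense set $X\setminus F$), which forces $F$ to carry the discrete topology as a subspace. Applying this to each $F_n$ from the first-category decomposition yields $G=\bigcup_{n\in\mathbb N} F_n$ with each $F_n$ closed discrete, i.e. $G$ is strongly $\sigma$-discrete.

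The main obstacle I anticipate is not in this final assembly — which is essentially formal once the first-category corollary and the elementary properties of submaximal spaces are in hand — but rather in making sure the hypotheses line up: the earlier corollary is stated for \emph{non-discrete irresolvable} topological gyrogroups, so I must verify that submaximal (plus Hausdorff, dense-in-itself as assumed) does imply irresolvable, and handle the degenerate discrete case separately. A secondary point worth stating carefully is the implication "nowhere dense $\Rightarrow$ closed discrete" in submaximal spaces; I would spell out that for any $A\subseteq F$ with $F$ nowhere dense, $X\setminus A\supseteq X\setminus F$ which is dense, so $X\setminus A$ is dense and hence open, so $A$ is closed — thus every subset of $F$ is closed in $X$, whence $F$ is discrete. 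With these two observations recorded, the corollary follows immediately.
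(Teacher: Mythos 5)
Your proposal is correct and follows essentially the same route as the paper, whose proof is a one-line appeal to the two facts you verify in detail: every submaximal space is irresolvable (so the preceding first-category corollary applies), and every nowhere dense subset of a submaximal space is closed and discrete. Your fleshing out of these two facts, and the separate handling of the discrete case, are accurate and consistent with the paper's intent.
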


\begin{proof}
It follows directly from the facts that every nowhere dense subset is closed and discrete in a submaximal space and every submaximal space is irresolvable.
\end{proof}

\smallskip
\section{Submaximal properties of strongly topological gyrogroups}
In this section, we prove that the cellularity of every submaximal strongly topological gyrogroup $G$ is equal to the cardinality of $G$. Further, we prove that every submaximal strongly topological gyrogroup of non-measurable cardinality is hereditarily paracompact.

A topological gyrogroup $(G,\tau,\oplus)$ is {\it left $\kappa$-bounded} for some cardinal $\kappa$ if for every open neighborhood $U$ of the element $0$ there exists a subset $A\subset G$ with $|A|\leq \kappa$ such that $A\oplus U=G$. First, we need some lemmas in order to obtain one of main results in this section.

\begin{lemma}\label{yl3}\cite{LF5}
Suppose that $(G,\tau ,\oplus)$ is a strongly topological gyrogroup with a symmetric neighborhood base $\mathscr U$ at $0$. Suppose further that $U,V,W$ are all open neighborhoods of $0$ such that $V\oplus V\subset W$, $W\oplus W\subset U$ and $V,W\in \mathscr U$. If a subset $A$ of $G$ is $U$-disjoint, then the family of open sets $\{a\oplus V:a\in A\}$ is discrete in $G$.
\end{lemma}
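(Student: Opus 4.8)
The plan is to run the classical argument for topological groups, with the gyroassociative law (G3) playing the role of associativity and the ``strongly topological'' hypothesis---that every member of $\mathscr U$ is gyration-invariant---used to absorb the gyration error terms that the regroupings produce. I would first record two facts. \emph{(i)} For every $g\in G$ the left translation $l_{g}\colon x\mapsto g\oplus x$ is a homeomorphism of $G$ onto itself: it is continuous, and by the left cancellation law (Lemma~\ref{a}(1)) its inverse is $l_{\ominus g}$; hence $a\oplus V$ is open for each $a\in G$ and $x\oplus V$ is an open neighbourhood of $x$. \emph{(ii)} Since $\mathscr U$ is symmetric, each of $V,W$ satisfies $\ominus V=V$, $\ominus W=W$ and $\mbox{gyr}[p,q](V)=V$, $\mbox{gyr}[p,q](W)=W$ for all $p,q\in G$; moreover, whenever $c\oplus d=e$ in $G$, applying (G3) to the identity $c=c\oplus(d\oplus\ominus d)$ gives $c=e\oplus\mbox{gyr}[c,d](\ominus d)$, and if $d$ lies in one of $V,W$ then so does the correction term $\mbox{gyr}[c,d](\ominus d)$.

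Now fix $x\in G$; I claim that the open set $x\oplus V$ meets at most one member of $\{a\oplus V:a\in A\}$, which is precisely the asserted discreteness. Suppose not: there are distinct $a,b\in A$ with $(x\oplus V)\cap(a\oplus V)\neq\emptyset$ and $(x\oplus V)\cap(b\oplus V)\neq\emptyset$, say $x\oplus v=a\oplus v'$ and $x\oplus v_{1}=b\oplus v_{1}'$ with $v,v',v_{1},v_{1}'\in V$. Left-translate the first equation by $\ominus a$ and apply (G3): using Lemma~\ref{a}(1), $v'=\ominus a\oplus(x\oplus v)=(\ominus a\oplus x)\oplus\mbox{gyr}[\ominus a,x](v)$. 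Put $c:=\ominus a\oplus x$ and $d:=\mbox{gyr}[\ominus a,x](v)$; then $d\in V$ because $V\in\mathscr U$, so by \emph{(ii)} we get $c=v'\oplus\mbox{gyr}[c,d](\ominus d)\in V\oplus V\subset W$, i.e.\ $x=a\oplus c\in a\oplus W$. Symmetrically $x\in b\oplus W$, hence $(a\oplus W)\cap(b\oplus W)\neq\emptyset$.

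Finally, I repeat the computation one step up. Choose $w_{1},w_{2}\in W$ with $a\oplus w_{1}=b\oplus w_{2}$. Left-translating by $\ominus b$ and applying (G3) exactly as above---now the invariance $\mbox{gyr}[\ominus b,a](W)=W$ is what is needed, and it holds because $W\in\mathscr U$---fact \emph{(ii)} yields $\ominus b\oplus a\in W\oplus W\subset U$, that is, $a\in b\oplus U$. This contradicts the $U$-disjointness of $A$ (for instance $a\in(a\oplus U)\cap(b\oplus U)$), completing the proof. The only point that requires genuine care is the bookkeeping in \emph{(ii)}: at each use of (G3) one must verify that the gyration factor is evaluated at an element of the symmetric set $V$ (respectively $W$) belonging to $\mathscr U$, so that it is absorbed back into $V$ (respectively $W$) and the inclusions $V\oplus V\subset W$ and $W\oplus W\subset U$ can be used just as in a group; for a topological gyrogroup that is not strongly topological this absorption is unavailable, which is exactly where the hypothesis $V,W\in\mathscr U$ enters. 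Everything else is routine computation with the identities of Lemma~\ref{a}.
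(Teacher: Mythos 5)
Your proof is correct: the two-stage absorption argument (first pushing the intersection point into $a\oplus W$ and $b\oplus W$ via the right cancellation law of Lemma~\ref{a}(2), then repeating to land $\ominus b\oplus a$ in $W\oplus W\subset U$), with the gyration terms absorbed using $\mbox{gyr}[\cdot,\cdot](V)=V$ and $\mbox{gyr}[\cdot,\cdot](W)=W$ for $V,W\in\mathscr U$, is exactly the intended argument. The paper itself only cites this lemma from \cite{LF5} without reproducing a proof, but your computation is the same technique the paper carries out explicitly in the analogous statements (e.g.\ part (2) of Lemma~\ref{yl4} and the discreteness claims in the final theorem), so there is nothing to add.
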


\begin{lemma}\label{4dl1}
Let $(G,\tau ,\oplus)$ be a strongly topological gyrogroup with a symmetric open neighborhood base $\mathscr U$ at $0$. If $c(G)\leq \kappa$, then $G$ is left $\kappa$-bounded.
\end{lemma}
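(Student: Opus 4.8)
The plan is to argue by a maximality/Zorn's-lemma selection of a "maximal $U$-disjoint set" for each basic neighborhood of $0$, exactly as one does for topological groups, using that the strong structure of $\mathscr U$ turns such a set into a discrete family of translates (Lemma \ref{yl3}). First I would fix an arbitrary open neighborhood $U$ of $0$ and, using joint continuity of $\oplus$ and $\ominus$ together with the fact that $\mathscr U$ is a base at $0$, choose symmetric $W,V\in\mathscr U$ with $V\oplus V\subset W$ and $W\oplus W\subset U$. Then I would let $A$ be a maximal subset of $G$ that is $W$-disjoint, meaning $a\notin b\oplus W$ for distinct $a,b\in A$ (equivalently $(a\oplus W)\cap(b\oplus W)$ misses the "centers"); such a maximal $A$ exists by Zorn's lemma since the union of a chain of $W$-disjoint sets is $W$-disjoint.

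The key step is then to show $|A|\le\kappa$ and $A\oplus U=G$. For the cardinality bound, Lemma \ref{yl3} (applied with $W$ in the role of the separating neighborhood, $V$ as the small one) gives that $\{a\oplus V:a\in A\}$ is a discrete, hence in particular pairwise disjoint, family of nonempty open sets; so it is a cellular family, and $c(G)\le\kappa$ forces $|A|\le\kappa$. For the covering property, I would use maximality of $A$: given any $g\in G$, if $g\notin A\oplus W$ then $A\cup\{g\}$ would still be $W$-disjoint (one checks $g\notin a\oplus W$ and $a\notin g\oplus W$ for all $a\in A$; the second follows from the first because $W$ is symmetric and, by the left cancellation law of Lemma \ref{a}, $a\in g\oplus W \iff g\in a\oplus(\ominus W)=a\oplus W$ up to the gyroautomorphism, which fixes $W$ since $W\in\mathscr U$). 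This contradicts maximality, so $G=A\oplus W\subset A\oplus U$, and hence $G$ is left $\kappa$-bounded.

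The main obstacle I expect is the bookkeeping around gyrations in the maximality argument: in a gyrogroup $a\in b\oplus W$ and $b\in a\oplus W$ are not symmetric identities for free, and one must invoke the right/left cancellation laws from Lemma \ref{a} and, crucially, the defining property $\mathrm{gyr}[x,y](W)=W$ of the strong neighborhood base to push the relevant $\mathrm{gyr}$ terms back inside $W$. A second, minor point is making sure the family $\{a\oplus V:a\in A\}$ genuinely consists of distinct sets so that it has cardinality $|A|$; this is immediate from discreteness (a discrete family of nonempty sets is point-finite, in particular the sets are distinct) once Lemma \ref{yl3} is in hand. Apart from these gyro-algebraic checks, the argument is the standard one.
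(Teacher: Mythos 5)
Your proposal is correct and follows essentially the same route as the paper: a maximal separated set produced by Zorn's lemma, the cellularity hypothesis applied to the pairwise disjoint (indeed discrete, by Lemma \ref{yl3}) family $\{a\oplus V:a\in A\}$ to get $|A|\leq\kappa$, and maximality together with the right cancellation law and the $\mathrm{gyr}$-invariance of the basic neighborhoods to get $A\oplus U=G$. The only cosmetic differences are that you take $A$ maximal $W$-separated and conclude $G=A\oplus W$ directly (the paper takes $A$ maximal with disjoint $V$-translates and pushes an arbitrary $x$ into $a\oplus(V\oplus V)\subset a\oplus U$), and that matching your separation level to the hypotheses of Lemma \ref{yl3} strictly requires one more link in the chain of neighborhoods, a harmless adjustment since disjointness of the $V$-translates already follows from $W$-separation and $V\oplus V\subset W$.
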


\begin{proof}
For an arbitrary open neighborhood $U$ of the identity element $0$ in $G$, there exist $V,W\in \mathscr U$ such that $V\oplus V\subset W$ and $W\oplus W\subset U$. Let $$\mathscr F=\{A\subset G:(b\oplus V)\cap (a\oplus V)=\emptyset,\mbox{ for any distinct}\ a, b\in A\}.$$ Define $\leq $ in $G$ such that $A_{1}\leq A_{2}$ if and only if $A_{1}\subset A_{2}$, for any $A_{1},A_{2}\in \mathscr F$. Then, $(\mathscr F,\leq)$ is a poset and the union of any chain of $V$-disjoint sets is again a $V$-disjoint set. Therefore, it follows from Zorn's Lemma that there exists a maximal element $A$ in $\mathscr F$ so that $\{a\oplus V:a\in A\}$ is a disjoint family of non-empty spen sets in $G$. By Lemma \ref{yl3}, the family of open sets $\{a\oplus V:a\in A\}$ is discrete in $G$.  Since $c(G)\leq \kappa$, it follows that $|A|\leq \kappa$.

Since $A$ is maximal, for every $x\in G$, there exists $a\in A$ such that $(x\oplus V)\cap (a\oplus V)\not =\emptyset$. Then, there exist $v_{1},v_{2}\in V$ such that $x\oplus v_{1}=a\oplus v_{2}$. By the right cancellation law (2) in Lemma~\ref{a}, we have that
\begin{eqnarray}
x&=&(x\oplus v_{1})\oplus gyr[x,v_{1}](\ominus v_{1})\nonumber\\
&=&(a\oplus v_{2})\oplus gyr[x,v_{1}](\ominus v_{1})\nonumber\\
&\in &(a\oplus v_{2})\oplus gyr[x,v_{1}](V)\nonumber\\
&=&(a\oplus v_{2})\oplus V\nonumber\\
&=&a\oplus (v_{2}\oplus gyr[v_{2},a](V))\nonumber\\
&=&a\oplus (v_{2}\oplus V)\nonumber\\
&\subset &a\oplus (V\oplus V)\nonumber\\
&\subset &a\oplus U.\nonumber
\end{eqnarray}
Therefore, $A\oplus U=G$.
\end{proof}

\begin{corollary}\label{ktl1}
Every separable strongly topological gyrogroup $G$ is left $\omega$-narrow.
\end{corollary}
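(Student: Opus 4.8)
The plan is to deduce this as an immediate special case of Lemma \ref{4dl1}. First I would recall that a topological gyrogroup $G$ is \emph{left $\omega$-narrow} precisely when it is left $\omega$-bounded, i.e. for every open neighborhood $U$ of $0$ there is a countable set $A\subset G$ with $A\oplus U=G$. So it suffices to verify that the hypotheses of Lemma \ref{4dl1} hold with $\kappa=\omega$.

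The key observation is that separability controls cellularity: if $D$ is a countable dense subset of $G$, then every family of pairwise disjoint non-empty open sets must meet $D$ in pairwise disjoint non-empty pieces, so such a family has cardinality at most $|D|\le\omega$; hence $c(G)\le\omega$. Next I would invoke the standing convention (made explicit after Definition \ref{d11}) that a strongly topological gyrogroup may be assumed to carry a \emph{symmetric} open neighborhood base $\mathscr U$ at $0$, so that hypothesis of Lemma \ref{4dl1} is automatically available. Applying Lemma \ref{4dl1} with $\kappa=\omega$ then yields that $G$ is left $\omega$-bounded, equivalently left $\omega$-narrow.

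I do not expect any real obstacle here: the entire content is the one-line cellularity bound from separability plus a direct citation of Lemma \ref{4dl1}. The only point requiring a word of care is making sure the neighborhood base used is both open and symmetric, which is exactly what the remark following Definition \ref{d11} guarantees, so the statement of Lemma \ref{4dl1} applies verbatim. One could alternatively phrase the whole argument in a single sentence: a separable space has countable cellularity, and by Lemma \ref{4dl1} a strongly topological gyrogroup with countable cellularity is left $\omega$-bounded, hence left $\omega$-narrow.
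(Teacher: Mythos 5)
Your proposal is correct and is exactly the argument the paper intends (the paper leaves the proof implicit): separability gives $c(G)\leq\omega$, and Lemma \ref{4dl1} with $\kappa=\omega$ then yields that $G$ is left $\omega$-bounded, i.e.\ left $\omega$-narrow. Your remark about using the symmetric open neighborhood base guaranteed after Definition \ref{d11} is the right point of care, and nothing further is needed.
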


From \cite[Corollary 5.10]{LF4}, it follows that every pseudocompact rectifiable space is a Souslin space. Therefore, we have the following corollary.

\begin{corollary}
Every pseudocompact strongly topological gyrogroup is left $\omega$-narrow.
\end{corollary}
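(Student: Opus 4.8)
The plan is to reduce the statement to Lemma~\ref{4dl1}, the key point being that the right invariant to control here is the cellularity $c(G)$, not separability. (One cannot route directly through Corollary~\ref{ktl1}, since pseudocompactness of $G$ does not by itself give a countable dense subset; but it does, via the cited input, force countable cellularity, and Lemma~\ref{4dl1} is stated precisely in terms of $c(G)$.) So the first move is to observe that a strongly topological gyrogroup is in particular a topological gyrogroup, hence a rectifiable space by \cite{CZ}.

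With that in hand, I would run the following chain. Since $G$ is pseudocompact and rectifiable, \cite[Corollary 5.10]{LF4} applies and tells us that $G$ is a Souslin space; by definition this means exactly that $c(G)\le\omega$. Now $G$, being a strongly topological gyrogroup, admits a symmetric open neighborhood base $\mathscr U$ at $0$ (the remark following Definition~\ref{d11} shows one may always replace the members of the base by their symmetrizations). Thus Lemma~\ref{4dl1} applies with $\kappa=\omega$ and yields that $G$ is left $\omega$-bounded: for every open neighborhood $U$ of $0$ there is a countable $A\subset G$ with $A\oplus U=G$. This is precisely the assertion that $G$ is left $\omega$-narrow, which completes the argument.

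I do not anticipate a genuine obstacle in this corollary: all the real content is carried by the external result \cite[Corollary 5.10]{LF4} (pseudocompact rectifiable $\Rightarrow$ Souslin) together with Lemma~\ref{4dl1}, and the only points requiring a word of care are purely bookkeeping --- namely, making explicit that ``Souslin space'' is being used in the sense $c(G)\le\omega$ so that Lemma~\ref{4dl1} can be invoked with $\kappa=\omega$, and recording that ``left $\omega$-narrow'' is the $\kappa=\omega$ instance of the left $\kappa$-bounded notion introduced just before Lemma~\ref{yl3}.
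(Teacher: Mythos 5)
Your proposal is correct and follows exactly the route the paper intends: strongly topological gyrogroup $\Rightarrow$ topological gyrogroup $\Rightarrow$ rectifiable (by \cite{CZ}), pseudocompact rectifiable $\Rightarrow$ Souslin, i.e.\ $c(G)\le\omega$ (by \cite[Corollary 5.10]{LF4}), and then Lemma~\ref{4dl1} with $\kappa=\omega$ gives left $\omega$-boundedness, which is what the paper means by left $\omega$-narrow. The paper leaves this chain implicit (it only states the \cite{LF4} fact before the corollary), so your spelled-out version is the same argument, just made explicit.
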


\begin{proposition}\label{kmt1}
If $(G,\tau ,\oplus)$ is a left $\kappa$-bounded strongly topological gyrogroup with a symmetric open neighborhood base $\mathscr U$ at $0$ and $H$ is a subgyrogroup of $G$, then $H$ is also left $\kappa$-bounded.
\end{proposition}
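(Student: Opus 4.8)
The plan is to adapt the classical argument that a subgroup of a $\kappa$-bounded topological group is $\kappa$-bounded, keeping careful track of the gyroautomorphisms by means of Lemma~\ref{a}. Let $U$ be an arbitrary open neighborhood of $0$ in the subspace $H$ and fix an open set $V\subseteq G$ with $V\cap H=U$. The first step is to choose $W,W'\in\mathscr U$ with $W\subseteq V$ and $W'\oplus W'\subseteq W$; this is possible because $\mathscr U$ is a neighborhood base at $0$ and $\oplus$ is jointly continuous, and by the remark following Definition~\ref{d11} we may assume $W$ and $W'$ are symmetric. Since $G$ is left $\kappa$-bounded, there is $A\subseteq G$ with $|A|\le\kappa$ and $A\oplus W'=G$.

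Next I would, for every $a\in A$ with $(a\oplus W')\cap H\ne\emptyset$, fix a point $x_{a}\in(a\oplus W')\cap H$ and put $B=\{x_{a}:a\in A,\ (a\oplus W')\cap H\ne\emptyset\}$, so that $B\subseteq H$ and $|B|\le|A|\le\kappa$. It then remains to prove $B\oplus U=H$. The inclusion $B\oplus U\subseteq H$ is immediate, since $B,U\subseteq H$ and $H$ is a subgyrogroup. For the reverse inclusion, take $h\in H$; using $A\oplus W'=G$ we may write $h=a\oplus w_{2}$ with $a\in A$ and $w_{2}\in W'$, and then $h\in(a\oplus W')\cap H$, so $x_{a}$ is defined and $x_{a}=a\oplus w_{1}$ for some $w_{1}\in W'$.

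The key step is to identify $\ominus x_{a}\oplus h$. Applying item (4) of Lemma~\ref{a} with $x:=a$, $y:=w_{1}$ and $z:=\ominus w_{1}\oplus w_{2}$, and using $w_{1}\oplus(\ominus w_{1}\oplus w_{2})=w_{2}$ (the left cancellation law, Lemma~\ref{a}(1)), one gets
\begin{equation*}
\ominus x_{a}\oplus h=\ominus(a\oplus w_{1})\oplus\bigl(a\oplus(w_{1}\oplus(\ominus w_{1}\oplus w_{2}))\bigr)=gyr[a,w_{1}](\ominus w_{1}\oplus w_{2}).
\end{equation*}
Since $W'$ is symmetric, $\ominus w_{1}\oplus w_{2}\in W'\oplus W'\subseteq W$, and since $W\in\mathscr U$ we have $gyr[a,w_{1}](W)=W$; hence $\ominus x_{a}\oplus h\in W\subseteq V$. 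On the other hand $\ominus x_{a}\oplus h\in H$ because $x_{a},h\in H$, so $\ominus x_{a}\oplus h\in V\cap H=U$, and therefore $h=x_{a}\oplus(\ominus x_{a}\oplus h)\in x_{a}\oplus U\subseteq B\oplus U$ again by left cancellation. This gives $B\oplus U=H$, as desired.

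I expect the only real subtlety to be the step $gyr[a,w_{1}](W)=W$, which is the one place where the ``strongly'' hypothesis is genuinely used: it is precisely what forces us to pick $W$ from the gyration-invariant base $\mathscr U$ and to interpose the shrinking $W'\oplus W'\subseteq W\subseteq V$, whereas in a topological group one could work with $V$ directly. The remaining manipulations are routine gyrogroup bookkeeping with the cancellation laws.
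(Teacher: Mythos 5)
Your proof is correct and follows essentially the same strategy as the paper's: replace the set witnessing left $\kappa$-boundedness of $G$ by representatives $x_a\in(a\oplus W')\cap H$ and then show every $h\in H$ lies in $x_a\oplus U$ by absorbing a gyration into a member of the invariant base $\mathscr U$. The only cosmetic difference is that you compute $\ominus x_a\oplus h$ directly via Lemma~\ref{a}(4), whereas the paper reaches the same conclusion through two applications of the right cancellation law; both hinge on exactly the same use of the ``strongly'' hypothesis.
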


\begin{proof}
Let $W$ be an arbitrary open neighborhood of $0$ in $H$. Then we can fix $V\in \mathscr U$ such that $(V\oplus V)\cap H\subset W$. Since $G$ is left $\kappa$-bounded, there is a set $B$ with $|B|\leq \kappa$ in $G$ such that $B\oplus V=G$. Let $C=\{c\in B:(c\oplus V)\cap H\not =\emptyset\}$. It is obvious that $|C|\leq |B|\leq \kappa$ and $H\subset C\oplus V$. We can find $a_{c}\in (c\oplus V)\cap H$ for every $c\in C$. Let $A=\{a_{c}:c\in C\}$ and $|A|\leq \kappa$ in $H$. We show that $A\oplus W=H$.

Since $H$ is a subgyrogroup and $(V\oplus V)\cap H\subset W\subset H$, we have that $(A\oplus (V\oplus V))\cap H\subset A\oplus W$. Hence, it suffices to prove $H\subset A\oplus (V\oplus V)$. For every $c\in C$, there exists $v\in V$ such that $a_{c}=c\oplus v$. Therefore
\begin{eqnarray}
c&=&(c\oplus v)\oplus gyr[c,v](\ominus v)\nonumber\\
&=&a_{c}\oplus gyr[c,v](\ominus v)\nonumber\\
&\in &a_{c}\oplus gyr[c,v](V)\nonumber\\
&=&a_{c}\oplus V.\nonumber
\end{eqnarray}
Thus, $C\subset A\oplus V$. Moreover, since $H\subset C\oplus V$, we have that
\begin{eqnarray}
H&\subset &(A\oplus V)\oplus V\nonumber\\
&\subset&A\oplus (V\oplus gyr[V,A](V))\nonumber\\
&=&A\oplus (V\oplus V).\nonumber
\end{eqnarray}
Therefore, $H=A\oplus W$.
\end{proof}

\begin{lemma}\label{yl4}
Let $(G, \tau, \oplus)$ be a strongly topological gyrogroup with a symmetric open neighborhood base $\mathscr U$ at $0$ and $H$ a closed and discrete subgyrogroup of $G$. Take any open neighborhood $V$ of the identity element $0$ in $G$ such that $V\cap H=\{0\}$.

\smallskip
(1) If $U\in \mathscr U$ such that $U\oplus U\subset V$, then $H\not \subset A\oplus U$ for any $A\subset G$ with $|A|<|H|$.

\smallskip
(2) If $W,U\in \mathscr U$ such that $W\oplus W\subset V$ and $U\oplus U\subset W$, then the family $\{g\oplus U:g\in H\}$ is discrete in $G$.
\end{lemma}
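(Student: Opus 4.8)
The overall plan is to reduce both items, via the gyro-identities of Lemma~\ref{a}, to the single hypothesis $V\cap H=\{0\}$, the only work beyond the topological-group case being to make the occurring gyroautomorphisms disappear --- which is exactly where $U,W\in\mathscr U$ enters. For (1) I would argue by contradiction: if $H\subset A\oplus U$ with $|A|<|H|$, then from $H=\bigcup_{a\in A}\bigl(H\cap(a\oplus U)\bigr)$ and $|A|<|H|$ some translate $a_{0}\oplus U$ must contain two distinct points $h_{1},h_{2}\in H$, say $h_{j}=a_{0}\oplus u_{j}$ with $u_{j}\in U$. Taking $z=\ominus u_{1}\oplus u_{2}$ (so that $u_{1}\oplus z=u_{2}$ by left cancellation) in item~(4) of Lemma~\ref{a} gives
\[
\ominus h_{1}\oplus h_{2}=\ominus(a_{0}\oplus u_{1})\oplus(a_{0}\oplus u_{2})=\mbox{gyr}[a_{0},u_{1}](\ominus u_{1}\oplus u_{2}).
\]
Since $U$ is symmetric, $\ominus u_{1}\oplus u_{2}\in U\oplus U$, and since $\mbox{gyr}[a_{0},u_{1}]$ is an automorphism of $(G,\oplus)$ with $\mbox{gyr}[a_{0},u_{1}](U)=U$ (as $U\in\mathscr U$), it maps $U\oplus U$ onto $U\oplus U$; hence $\ominus h_{1}\oplus h_{2}\in U\oplus U\subset V$. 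But $\ominus h_{1}\oplus h_{2}\in H$ because $H$ is a subgyrogroup, so $\ominus h_{1}\oplus h_{2}\in V\cap H=\{0\}$, forcing $h_{1}=h_{2}$ by left cancellation --- a contradiction.

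For (2) I would show that for every $x\in G$ the neighborhood $x\oplus U$ of $x$ meets at most one member of $\{g\oplus U:g\in H\}$, which is precisely the discreteness of the family. Suppose $(x\oplus U)\cap(g_{j}\oplus U)\neq\emptyset$ for $j=1,2$ with $g_{1},g_{2}\in H$, and pick $u_{j},u_{j}'\in U$ with $x\oplus u_{j}=g_{j}\oplus u_{j}'$. Solving for $g_{j}$ with the right cancellation law (item~(2) of Lemma~\ref{a}) and then re-bracketing by gyroassociativity, every gyroautomorphism that appears carries $U$ onto $U$, so one obtains $g_{j}=x\oplus(u_{j}\oplus s_{j})$ for suitable $s_{j}\in U$; therefore $g_{1},g_{2}\in x\oplus(U\oplus U)\subset x\oplus W$. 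Writing $g_{j}=x\oplus w_{j}$ with $w_{j}\in W$ and repeating the computation of (1) with $W$ in place of $U$,
\[
\ominus g_{1}\oplus g_{2}=\mbox{gyr}[x,w_{1}](\ominus w_{1}\oplus w_{2})\in\mbox{gyr}[x,w_{1}](W\oplus W)=W\oplus W\subset V,
\]
using that $W$ is symmetric and $W\in\mathscr U$. As before $\ominus g_{1}\oplus g_{2}\in H$, so $\ominus g_{1}\oplus g_{2}\in V\cap H=\{0\}$ and $g_{1}=g_{2}$.

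The routine parts are the cardinality count in (1) and the left-translation manipulations of the cancellation laws in (2). The one point requiring care --- and the reason arbitrary neighborhoods would not do --- is keeping the gyrations $\mbox{gyr}[\cdot,\cdot]$ from enlarging $U$ and $W$: whenever such an automorphism appears it must be absorbed using $\mbox{gyr}[p,q](U)=U$ (resp.\ $\mbox{gyr}[p,q](W)=W$) together with the fact that $\mbox{gyr}[p,q]$ is a groupoid automorphism, so that $\mbox{gyr}[p,q](\ominus t)=\ominus\mbox{gyr}[p,q](t)$ and it distributes over $\oplus$, whence expressions such as $\ominus u_{1}\oplus u_{2}$ stay inside $U\oplus U$ after the gyration is applied. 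Once this bookkeeping is handled, (2) becomes the gyrogroup version of the familiar fact that a set which is $V$-disjoint (in the appropriate sense) has discrete $U$-translates, cf.\ Lemma~\ref{yl3}.
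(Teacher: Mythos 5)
Your proposal is correct and follows essentially the same strategy as the paper's proof: in both parts, two distinct points of $H$ falling into a single translate force a nonzero element of $V\cap H$, with the gyrations absorbed via $\mathrm{gyr}[\cdot,\cdot](U)=U$ and $\mathrm{gyr}[\cdot,\cdot](W)=W$. The only (harmless) difference is cosmetic: you compute $\ominus h_{1}\oplus h_{2}=\mathrm{gyr}[a_{0},u_{1}](\ominus u_{1}\oplus u_{2})$ directly from Lemma~\ref{a}(4), whereas the paper reaches the same conclusion by chaining inclusions such as $h\in g\oplus(U\oplus U)\subset g\oplus V$ using the right cancellation law.
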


\begin{proof}
Indeed, if there is $A\subset G$ with $H\subset A\oplus U$ such that $|A|<|H|$, then for some $a\in A$ the set $a\oplus U$ must contain at least two elements of $H$. Take any $h, g\in H\cap (a\oplus U)$ such that $h\not =g$. Then there are $v,u\in U$ such that $h=a\oplus u$ and $g=a\oplus v$. Since
\begin{eqnarray}
a&=&(a\oplus v)\oplus gyr[a,v](\ominus v)\nonumber\\
&=&g\oplus gyr[a,v](\ominus v)\nonumber\\
&\in &g\oplus gyr[a,v](U)\nonumber\\
&=&g\oplus U,\nonumber
\end{eqnarray}
it follows that
\begin{eqnarray}
h&=&a\oplus u\nonumber\\
&\in &(g\oplus U)\oplus u\nonumber\\
&\subset &(g\oplus U)\oplus U\nonumber\\
&=&g\oplus (U\oplus gyr[U,g](U))\nonumber\\
&=&g\oplus (U\oplus U)\nonumber\\
&\subset &g\oplus V.\nonumber
\end{eqnarray}
Therefore, $0\not =\ominus g\oplus h\in V\cap H$, which is a contradiction.

In order to prove (2), we take an arbitrary $g\in G$. We show that $g\oplus U$ intersects at most one element of $\{x\oplus U: x\in H\}$. Suppose not, then we can find $a, b, c, d\in U$ and distinct $p, q\in H$ such that $p\oplus a=g\oplus b$ and $q\oplus c=g\oplus d$. Since
\begin{eqnarray}
p&=&(p\oplus a)\oplus gyr[p,a](\ominus a)\nonumber\\
&=&(g\oplus b)\oplus gyr[p,a](\ominus a)\nonumber\\
&\in &(g\oplus b)\oplus gyr[p,a](U)\nonumber\\
&=&(g\oplus b)\oplus U\nonumber\\
&\subset &(g\oplus U)\oplus U\nonumber\\
&=&g\oplus (U\oplus gyr[U,g](U))\nonumber\\
&=&g\oplus (U\oplus U)\nonumber\\
&\subset &g\oplus W,\nonumber
\end{eqnarray}
and by the same method, we know that $g\in q\oplus W$. Therefore, $$p\in g\oplus W\subset (q\oplus W)\oplus W=q\oplus (W\oplus gyr[W,q](W))=q\oplus (W\oplus W)\subset q\oplus V,$$ which implies that $(\ominus q)\oplus p\in V$. Since $p$ and $q$ are different, we have $(\ominus q)\oplus p\neq 0$ and $(\ominus q)\oplus p\in V\cap H$, which is a contradiction.
\end{proof}

\begin{lemma}\label{lll}
If $G$ is a strongly topological gyrogroup, then there exists an open $L$-subgyrogroup $H$ of $G$ such that $|H|=\triangle(G)$.
\end{lemma}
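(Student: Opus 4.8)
The plan is to obtain $H$ as the subgyrogroup generated by a suitably small gyration-invariant neighbourhood of $0$. First I would record two preliminaries. Since the left translations $l_{g}\colon x\mapsto g\oplus x$ are homeomorphisms of $G$ (with inverse $l_{\ominus g}$ by the left cancellation law in Lemma~\ref{a}), the space $G$ is topologically homogeneous, so $\triangle(x,G)$ is independent of $x$ and $\triangle(G)=\triangle(0,G)$; in particular there is an open neighbourhood $U$ of $0$ with $|U|=\triangle(G)$. Moreover, since $G$ is dense in itself, $\triangle(G)\geq\omega$. Using the distinguished neighbourhood base $\mathscr U$ of Definition~\ref{d11} (which, by the remark following that definition, we may take to consist of symmetric sets containing $0$), I would choose $W\in\mathscr U$ with $W\subseteq U$; then $|W|\leq\triangle(G)$ and $\mathrm{gyr}[x,y](W)=W$ for all $x,y\in G$.

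Next I would put $H=\langle W\rangle$, the subgyrogroup generated by $W$, which exists by the subgyrogroup criterion of \cite{ST} (close $W$ off under $\oplus$ and $\ominus$). Openness of $H$ is immediate: $W\subseteq H$ and $H$ is closed under $\oplus$, so $H=\bigcup_{h\in H}(h\oplus W)$ is a union of open sets. For the cardinality I would realise $H$ exactly as in the proof of Proposition~\ref{p0000}, namely $H=\bigcup_{n\in\mathbb{N}}Y_{n}$ with $Y_{0}=W$ and $Y_{n+1}=\ominus(Y_{n}\oplus Y_{n})\cup(Y_{n}\oplus Y_{n})$, so that $|Y_{n}|\leq|W|$ for every $n$ and hence $|H|\leq\omega\cdot|W|=|W|\leq\triangle(G)$, where the equality $\omega\cdot|W|=|W|$ uses $\triangle(G)\geq\omega$. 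Since $H$ is nonempty and open, $|H|\geq\triangle(G)$, and therefore $|H|=\triangle(G)$.

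Finally I would verify that $H$ is an $L$-subgyrogroup. Fix $a\in G$ and $h\in H$ and set $\varphi=\mathrm{gyr}[a,h]$, which by (G3) is a groupoid automorphism of $(G,\oplus)$. A bijective groupoid homomorphism carries the subgyrogroup generated by a set onto the subgyrogroup generated by the image of that set, so $\varphi(H)=\varphi(\langle W\rangle)=\langle\varphi(W)\rangle$; and $\varphi(W)=\mathrm{gyr}[a,h](W)=W$ by the choice of $W$, whence $\varphi(H)=\langle W\rangle=H$. Thus $\mathrm{gyr}[a,h](H)=H$ for all $a\in G$ and $h\in H$, i.e. $H\leq_{L}G$, giving the desired open $L$-subgyrogroup of cardinality $\triangle(G)$.

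I expect the only genuinely delicate point to be the identity $\varphi(\langle W\rangle)=\langle\varphi(W)\rangle$, which I would justify by an easy induction on the stages $Y_{n}$ of the generation process, using $\varphi(x\oplus y)=\varphi(x)\oplus\varphi(y)$ and $\varphi(\ominus x)=\ominus\varphi(x)$ together with the surjectivity of $\varphi$. Everything else is routine: homogeneity and openness are formal, and the cardinal computation only relies on the standing hypothesis that $G$ has no isolated points, which makes $\triangle(G)$ infinite.
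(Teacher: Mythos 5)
Your proposal is correct and follows essentially the same route as the paper: take a gyration-invariant basic neighbourhood of $0$ of cardinality $\triangle(G)$ (using homogeneity), generate an open subgyrogroup by iterating $\oplus$ and $\ominus$, bound its cardinality by $\omega\cdot\triangle(G)=\triangle(G)$, and verify invariance under all gyrations by induction on the stages of the generation process. The only cosmetic difference is that the paper proves $\mathrm{gyr}[x,y](H)=H$ for all $x,y\in G$ directly via the induction on $H_{n}$, whereas you package the same induction as the identity $\varphi(\langle W\rangle)=\langle\varphi(W)\rangle$ for the automorphism $\varphi=\mathrm{gyr}[a,h]$.
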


\begin{proof}
Let $G$ be a strongly topological gyrogroup with a symmetric open neighborhood base $\mathscr U$ at $0$. Since $\mathscr U$ is a base at $0$ and $G$ is homogeneous, it follows from Definition~\ref{d00} that there exists an open neighborhood $V$ of $0$ such that $|V|=\triangle(G)$, then we can find $U\in\mathscr U$ such that $U\subset V$. Clearly, $|U|\leq |V|$ because $U\subset V$, hence $|U|=\triangle(G)$. Put $H_{0}=U$. We define a sequence $\{H_{n}\}_{n\in\mathbb{N}}$ of subsets of $G$ such that $H_{n+1}=\ominus(H_{n}\oplus H_{n})\cup (H_{n}\oplus H_{n})$ for each $n\in\mathbb{N}$. Set $H=\bigcup_{n\in\mathbb{N}}H_{n}$, hence $H$ is open in $G$ and $|H|=\triangle(G)$. We claim that $H$ is an $L$-subgyrogroup in $G$. It is clear that $H$ is closed for the gyrogroup operation and the inverse, so $H$ is a subgyrogroup of $G$. Moreover, for any $x, y\in G$, $\mbox{gyr}[x, y]$ is a groupoid homomorphism from $G$ onto itself and $\mbox{gyr}[x, y](U)=U$. Next we claim that $\mbox{gyr}[x, y](H_{n})=H_{n}$ for any $x, y\in G$ and $n\in\mathbb{N}$. Clearly, we have $\mbox{gyr}[x, y](H_{0})=H_{0}$ for all $x, y\in G$.
By induction, we may assume that for some $n\in\mathbb{N}$ we have $\mbox{gyr}[x, y](H_{n})=H_{n}$ for any $x, y\in G$. Now we prove $\mbox{gyr}[x, y](H_{n+1})=H_{n+1}$ for any $x, y\in G$.

Indeed, for all $x, y\in G$, we have
\begin{eqnarray}
\mbox{gyr}[x, y](H_{n+1})&=&\mbox{gyr}[x, y](\ominus(H_{n}\oplus H_{n})\cup (H_{n}\oplus H_{n}))\nonumber\\
&=&\ominus (\mbox{gyr}[x, y](H_{n})\oplus \mbox{gyr}[x,y]( H_{n}))\cup (\mbox{gyr}[x,y](H_{n})\oplus \mbox{gyr}[x,y](H_{n}))\nonumber\\
&=&\ominus (H_{n}\oplus H_{n})\cup (H_{n}\oplus H_{n})\nonumber\\
&=&H_{n+1}.\nonumber
\end{eqnarray}
 Then, for any $z\in H$, there exists $n\in \mathbb{N}$ such that $z\in H_{n}$. It follows that $\mbox{gyr}[x, y](z)\in \mbox{gyr}[x, y](H_{n})=H_{n}\subset H$. Hence $\mbox{gyr}[x, y](H)=H$. Therefore, $H$ is an $L$-subgyrogroup of $G$.
\end{proof}

Let the gyrogroup $K_{16}$ be endowed with discrete topology \cite[p. 41]{UA2002} and let $\mathbb{D}$ be topological gyrogroup in Example~\ref{lz1}. Put $G=K_{16}\times \mathbb{D}$, where $G$ is endowed with the product topology and the operation with coordinate. Fix an arbitrary $L$-subgyrogroup $H$ in $K_{16}$, for example, $H=\{0, 1, 2, 3\}$ or $\{0, 1, 2, 3, \cdots, 7\}$. Then $H$ is an open $L$-subgyrogroup of $G$ such that $|H|=\triangle(G)$.

\begin{theorem}\label{4dl2}
Let $\kappa$ be an infinite cardinal number. If $G$ is a left $\kappa$-bounded submaximal strongly topological gyrogroup, then $|G|\leq \kappa$.
\end{theorem}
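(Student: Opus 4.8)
The plan is to argue by contradiction: assume $|G|>\kappa$ and reach an absurdity. Since $G$ is Hausdorff and dense in itself it is infinite, so $\kappa$ is an infinite cardinal $<|G|$. First I would pin down the dispersion character: for any open neighbourhood $U$ of $0$, left $\kappa$-boundedness gives $A\subseteq G$ with $|A|\le\kappa$ and $A\oplus U=G$, so $|G|\le|A|\cdot|U|\le\kappa\cdot|U|$; hence $|U|>\kappa$ for every such $U$, and since every nonempty open set is — via the homeomorphism $l_{g}$ and the left cancellation law (Lemma~\ref{a}(1)) — equipotent with a neighbourhood of $0$, we get $\Delta(G)>\kappa$ and then $|G|\le\kappa\cdot\Delta(G)=\Delta(G)$, i.e. $\Delta(G)=|G|=:\lambda$. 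Now I would fix a canonical decomposition $\{H_{\alpha}:\alpha<\lambda\}$ (Lemma~\ref{yl1}); since $G$ is submaximal it is non-discrete and irresolvable, so by Lemma~\ref{yl2} the family $\xi=\{A\subseteq\lambda:\mbox{Int}(H_{A})\ne\emptyset\}$ is a free ultrafilter on $\lambda$. The density argument inside the proof of Lemma~\ref{yl2} shows that $\mbox{Int}(H_{A})$ is dense whenever it is nonempty; as $G$ is submaximal, $H_{A}$ is then open for every $A\in\xi$. Consequently, if $A\notin\xi$ then $H_{A}=G\setminus H_{\lambda\setminus A}$ is closed with $\mbox{Int}(H_{A})=\emptyset$, hence closed and nowhere dense, hence closed and discrete (a standard property of submaximal spaces). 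In particular, for every $\beta<\lambda$ we have $|G_{\beta}|=|\beta|<\lambda=\Delta(G)$, so $\mbox{Int}(G_{\beta})=\emptyset$, so $[0,\beta]\notin\xi$, and therefore each $G_{\beta}$ is a closed discrete subgyrogroup of $G$ (Lemma~\ref{yl1}).

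Next I would observe that a closed discrete subgyrogroup $D$ of $G$ must satisfy $|D|\le\kappa$: by Proposition~\ref{kmt1}, $D$ is left $\kappa$-bounded, and since $\{0\}$ is an open neighbourhood of $0$ in the discrete space $D$, there is $A\subseteq D$ with $|A|\le\kappa$ and $D=A\oplus\{0\}=A$. Applying this to the $G_{\beta}$ gives $|\beta|=|G_{\beta}|\le\kappa$ for all $\beta<\lambda$, hence $\lambda\le\kappa^{+}$; together with $\lambda>\kappa$ this forces $\lambda=\kappa^{+}$. It then remains to exclude $|G|=\kappa^{+}$. Since $\kappa^{+}$ is a successor cardinal it is non-measurable, so Theorem~\ref{3dl1} applies and yields a countable family $\{H_{A_{n}}\}$ of closed nowhere dense — hence closed discrete — subsets with union $G$; as $|G|=\kappa^{+}$ is regular and uncountable, $|H_{A_{n_{0}}}|=\kappa^{+}$ for some $n_{0}$. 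Using the symmetric neighbourhood base $\mathscr U$ (with $\mbox{gyr}[x,y](U)=U$) and the discreteness criteria of Lemmas~\ref{yl3} and~\ref{yl4}, I would then carry out a transfinite construction of length $\kappa^{+}$ of an increasing chain of discrete subgyrogroups $D_{\xi}$, keeping $V\cap D_{\xi}=\{0\}$ for one fixed small $V\in\mathscr U$ throughout and, at successor stages, adjoining a new generator taken from $H_{A_{n_{0}}}$ "far enough out" (possible because $|D_{\xi}|\le\kappa<\kappa^{+}=\Delta(G)$ while $\mbox{Int}(D_{\xi})=\emptyset$) so that the generated subgyrogroup stays $V$-small. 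The union $D=\bigcup_{\xi<\kappa^{+}}D_{\xi}$ is a discrete subgyrogroup with $|D|=\kappa^{+}$; and a discrete subgyrogroup of a topological gyrogroup is closed — the standard group argument works after replacing inverses/products by their gyrogroup analogues and using left cancellation — so $D$ is a closed discrete subgyrogroup of cardinality $\kappa^{+}>\kappa$, contradicting the paragraph above. Hence $|G|\le\kappa$.

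The clean part of this is the reduction to $\Delta(G)=|G|$ and the recognition that the $G_{\beta}$ are closed discrete subgyrogroups, which already yields $|G|\le\kappa^{+}$. The hard part — and the place where the \emph{strongly} topological hypothesis is genuinely used — is the final step: promoting a large closed discrete \emph{set} to a large closed discrete \emph{subgyrogroup}. The two delicate points there are maintaining $V\cap D_{\xi}=\{0\}$ through the limit stages (the union of a chain of discrete subgyrogroups need not be discrete, so a single $V$ has to be kept throughout and the $V$-disjointness controlled by Lemmas~\ref{yl3}/\ref{yl4}), and handling the degenerate possibility that the construction stops early because $G$ is already covered by $\le\kappa$ pairwise disjoint $V$-translates — in that case one splits off a clopen piece and reruns the argument inside it, again using that $\kappa^{+}$ is non-measurable and Theorem~\ref{3dl1}. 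I expect most of the write-up to be spent making this construction precise.
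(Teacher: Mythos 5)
Your opening reductions are correct, and one of them is genuinely slicker than the paper's: from left $\kappa$-boundedness and $|G|>\kappa$ you get $\Delta(G)=|G|=\lambda$, and your observation that Proposition~\ref{kmt1} forces every discrete subgyrogroup $D$ to satisfy $|D|\le\kappa$ (because $\{0\}$ is open in $D$, so $D=A\oplus\{0\}=A$ for some $A$ with $|A|\le\kappa$) combines with Lemma~\ref{yl1} to give $\lambda\le\kappa^{+}$ in one stroke, replacing the paper's entire limit-cardinal case. The gap is the last step, where $\lambda=\kappa^{+}$ must be excluded. You propose to upgrade a closed discrete \emph{subset} $H_{A_{n_{0}}}$ of cardinality $\kappa^{+}$ to a closed discrete \emph{subgyrogroup} of cardinality $\kappa^{+}$ by a transfinite recursion that adjoins one generator at a time while keeping $\langle D_{\xi}\cup\{x\}\rangle\cap V=\{0\}$. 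The successor step is never justified: choosing $x$ ``far enough out'' controls only $x$ itself, not the iterated sums, inverses and gyrations of $x$ with elements of $D_{\xi}$ that make up the generated subgyrogroup, and nothing you cite (Lemma~\ref{yl3} or~\ref{yl4}) shows that some admissible $x$ exists. You flag this yourself as the hard part and leave it as a sketch, so the proof is incomplete exactly where the theorem has content.

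The paper avoids this construction entirely. In the successor case it works inside the open $L$-subgyrogroup $N$ of Lemma~\ref{lll}, takes a canonical decomposition $\{H_{\alpha}:\alpha<\lambda\}$ and (via the free ultrafilter of Lemma~\ref{yl2}) a cofinal $A\subset\lambda$ with $H_{A}$ closed, discrete and missing $0$, and then selects one point $x_{\alpha}\in H_{\alpha}$ for each $\alpha\in A$. Property (6) of Lemma~\ref{yl1} gives $(\ominus x_{\alpha})\oplus x_{\beta}\in H_{\beta}\subset H_{A}$ for $\alpha<\beta$, so after fixing an open $U$ with $U\cap H_{A}=\emptyset$ and $V\in\mathscr U$ with $V\oplus V\subset U$, the gyration identities show that no translate $p\oplus V$ contains two points of $Y=\{x_{\alpha}:\alpha\in A\}$; since $|Y|=\lambda$ is regular, fewer than $\lambda$ translates of $V$ cannot cover $G$, contradicting left $\kappa$-boundedness. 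The missing idea in your write-up is precisely this use of the canonical decomposition to manufacture a ``$U$-separated'' closed discrete set of full cardinality directly, which makes the large discrete subgyrogroup unnecessary. If you splice that selection argument into your proof in place of the transfinite construction (it applies to any regular $\lambda>\kappa$, in particular to $\lambda=\kappa^{+}$), the rest of your argument stands.
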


\begin{proof}
Let $\triangle(G)=\lambda$, and from Lemma~\ref{lll} take an open $L$-subgyrogroup $N$ of $G$ such that $|N|=\lambda$. Then $G$ is a discrete union of left translations of $N$ and it is impossible to cover $G$ by less than $|G/N|$ left translations of $N$. Thus, $|G/N|\leq \kappa$. Then it suffices to prove $\lambda =|N|\leq \kappa$. We divide the proof into the following two cases.

\smallskip
{\bf Case 1} $\lambda$ is a limit cardinal.

\smallskip
We assume that $\kappa <\lambda$ and take a subset $P\subset N$ such that $\kappa <|P|=\gamma <\lambda$. Then $|\langle P\rangle|=\gamma$ and $H=\langle P\rangle$ is closed and discrete in $G$ since $G$ is submaximal. From (1) of Lemma \ref{yl4}, it follows that $G$ is not left $\kappa$-bounded, which is a contradiction.

\smallskip
{\bf Case 2} $\lambda$ is a successor cardinal.

\smallskip
Then $\lambda =cf(\lambda)$. Since the $L$-subgyrogroup $N$ is submaximal, it follows that $\triangle(N)=|N|$, then we can take a canonical decomposition $\{H_{\alpha } :\alpha <\lambda\}$ for $N$. By Lemma \ref{yl2}, there exists a cofinal set $A\subset \lambda$ such that $H_{A}=\bigcup \{H_{\alpha}:\alpha \in A\}$ is closed and discrete in $G$ and $0\not \in H_{A}$.

For every $\alpha \in A$, choose a point $x_{\alpha}\in H_{\alpha}$. It is obvious that $Y=\{x_{\alpha}:\alpha \in A\}$ is closed and discrete in $G$. By the confinality of $A$ in $\lambda$, we have $|Y|=\lambda$. Hence we can find an open neighborhood $U$ of $0$ such that $U\cap H_{A}=\emptyset$. There exists $V\in \mathscr U$ such that $V\oplus V\subset U$. We claim that $P\oplus V\not=G$ for any $P\subset G$ with $|P|<\lambda.$

Suppose not, then there exists $P\subset G$ with $|P|<\lambda$ such that $P\oplus V=G$. Since $|Y|=\lambda$ and $|P|<\lambda$, there exists a $p\in P$ such that $p\oplus V$ contains at least two distinct points $x_{\alpha},x_{\beta}$ of $Y$, where $\alpha <\beta$. It follows from $\ominus x_{\alpha}\in H_{\alpha}$ and $x_{\beta}\in H_{\beta}$ that $(\ominus x_{\alpha})\oplus x_{\beta}\in H_{\beta}\subset H_{A}$ by (6) of Lemma~\ref{yl1}. Moreover, we can find $u,v\in V$ such that $p\oplus u=x_{\alpha}$ and $p\oplus v=x_{\beta}$. Since $$p=(p\oplus u)\oplus gyr[p,u](\ominus u)=x_{\alpha}\oplus gyr[p,u](\ominus u)\in x_{\alpha}\oplus gyr[p,u](V)=x_{\alpha}\oplus V,$$ then $$x_{\beta}\in (x_{\alpha}\oplus V)\oplus V=x_{\alpha}\oplus (V\oplus gyr[V,x_{\alpha}](V))=x_{\alpha}\oplus (V\oplus V).$$ Thus, $\ominus x_{\alpha}\oplus x_{\beta}\in (V\oplus V)\cap H_{A}\subset U\cap H_{A}$, which is a contradiction. Thus, $P\oplus V\not =G$.
Therefore, $\kappa \geq \lambda$.
\end{proof}

Now we can easily obtain the first main result in this section.

\begin{theorem}
$c(G)=|G|$ for every submaximal strongly topological gyrogroup $G$. In particular, a submaximal strongly topological gyrogroup with the Suslin property is countable.
\end{theorem}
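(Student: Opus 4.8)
The plan is to read this off immediately from Theorem~\ref{4dl2} and Lemma~\ref{4dl1}, taking $\kappa=c(G)$. The first thing I would check is that $\kappa$ is an infinite cardinal, so that Theorem~\ref{4dl2} is applicable. Since all spaces here are Hausdorff and dense in themselves, $G$ is a nonempty Hausdorff space with no isolated points; an open subspace of such a space again has no isolated points and is therefore infinite, so a straightforward induction — repeatedly picking a point distinct from a fixed one inside a shrinking nonempty open set and separating the two by disjoint open sets — yields a countably infinite pairwise disjoint family of nonempty open subsets of $G$. Hence $c(G)\ge\omega$, i.e. $\kappa$ is infinite.

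Next, because $G$ is a strongly topological gyrogroup, the remark following Definition~\ref{d11} lets me fix a symmetric open neighborhood base $\mathscr U$ at $0$. Since $c(G)\le\kappa$, Lemma~\ref{4dl1} shows that $G$ is left $\kappa$-bounded, and then Theorem~\ref{4dl2} gives $|G|\le\kappa=c(G)$. The reverse inequality $c(G)\le|G|$ is automatic, as any family of pairwise disjoint nonempty subsets of $G$ has cardinality at most $|G|$. Therefore $c(G)=|G|$.

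For the final assertion, having the Suslin property means precisely $c(G)\le\omega$; combined with the equality just established this forces $|G|\le\omega$, i.e. $G$ is countable. I do not expect any real obstacle here: the substantive work is already contained in Lemma~\ref{4dl1} and Theorem~\ref{4dl2}, and the only mildly delicate point is verifying that the cellularity of a dense-in-itself Hausdorff space is infinite, so that the hypothesis "$\kappa$ infinite" of Theorem~\ref{4dl2} is met.
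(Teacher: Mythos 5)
Your proposal is correct and follows essentially the same route as the paper: set $\kappa=c(G)$, invoke Lemma~\ref{4dl1} to get left $\kappa$-boundedness, then Theorem~\ref{4dl2} to conclude $|G|\leq\kappa$, with $c(G)\leq|G|$ being automatic. Your additional verification that $c(G)$ is infinite (so the hypothesis of Theorem~\ref{4dl2} is met) is a point the paper leaves implicit, but it is an easy consequence of the standing assumption that all spaces are dense in themselves.
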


\begin{proof}
By Lemma~\ref{4dl1}, if $c(G)\leq \kappa$, we have that $G$ is left $\kappa$-bounded. Then it follows from Theorem \ref{4dl2} that $|G|\leq \kappa$.
\end{proof}

Finally, we prove the second main result in this section.

\begin{lemma}\cite[Lemma 3.13]{APT}\label{yl5}
Let $X$ be a regular space. Suppose that $X=\bigcup \{H_{n}:n\in \mathbb{N}\}$, where the subsets $H_{n}$ have the following properties:

\smallskip
(1) $H_{i}$ is closed and discrete in $X$ for all $i\in \mathbb{N}$;

\smallskip
(2) $H_{i}\cap H_{j}=\emptyset$ if $i\not =j$;

\smallskip
(3) for every $x\in X$ there is an open neighborhood $V_{x}$ of $x$ such that for any $i\in \mathbb{N}$ the family $\{V_{x}:x\in H_{i}\}$ is discrete in $X$.

\smallskip
Then $X$ is weakly collectionwise Hausdorff.
\end{lemma}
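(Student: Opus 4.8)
The plan is to unwind the definition: to say that $X$ is weakly collectionwise Hausdorff it suffices to show that for every closed discrete subset $D$ of $X$ there is a \emph{pairwise disjoint} family $\{U_{x}:x\in D\}$ of open sets with $x\in U_{x}$ for each $x$. Fix such a $D$ and decompose it along the given partition by setting $D_{n}=D\cap H_{n}$; then $D=\bigcup_{n\in\mathbb{N}}D_{n}$, and the $D_{n}$ are pairwise disjoint by (2). Since $H_{n}$ is closed and discrete and $D_{n}\subseteq H_{n}$, each $D_{n}$ is closed and discrete in $X$ (closed because $D$ and $H_{n}$ are closed), and since the family $\{V_{x}:x\in H_{n}\}$ is discrete by (3), so is its subfamily $\{V_{x}:x\in D_{n}\}$.

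First I would shrink the neighborhoods using regularity: for each $x\in D$ pick an open set $O_{x}$ with $x\in O_{x}\subseteq V_{x}$ and $\overline{O_{x}}\cap(D\setminus\{x\})=\emptyset$, which is possible because $D\setminus\{x\}$ is closed and does not contain $x$; hence $\overline{O_{x}}\cap D=\{x\}$. A family of subsets of the members of a discrete family is again discrete, so each $\{O_{x}:x\in D_{n}\}$ is discrete, hence in particular pairwise disjoint, and $\bigcup_{x\in D_{n}}\overline{O_{x}}$ is closed (it equals $\overline{\bigcup_{x\in D_{n}}O_{x}}$) with $\bigl(\bigcup_{x\in D_{n}}\overline{O_{x}}\bigr)\cap D=D_{n}$.

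Next I would construct the sets $U_{x}$ by recursion on $n$, simultaneously building an increasing sequence of closed sets $G_{n}$. Put $U_{x}=O_{x}$ for $x\in D_{1}$ and $G_{1}=\bigcup_{x\in D_{1}}\overline{U_{x}}$. Assume that pairwise disjoint open sets $U_{x}\ni x$ with $U_{x}\subseteq O_{x}$ have been chosen for all $x\in D_{1}\cup\cdots\cup D_{n}$, that $G_{n}=\bigcup_{x\in D_{1}\cup\cdots\cup D_{n}}\overline{U_{x}}$ is closed, and that $G_{n}\cap D=D_{1}\cup\cdots\cup D_{n}$. Every $x\in D_{n+1}$ then lies outside $G_{n}$, because $D_{n+1}$ is disjoint from $D_{1}\cup\cdots\cup D_{n}$, so regularity lets us pick an open $U_{x}$ with $x\in U_{x}\subseteq O_{x}$ and $\overline{U_{x}}\cap G_{n}=\emptyset$. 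The family $\{U_{x}:x\in D_{n+1}\}$ is pairwise disjoint as a shrinking of the discrete family $\{O_{x}:x\in D_{n+1}\}$, and for $y\in D_{1}\cup\cdots\cup D_{n}$ we have $U_{y}\subseteq G_{n}$, whence $U_{x}\cap U_{y}=\emptyset$. Since $\overline{U_{x}}\cap D=\{x\}$ for $x\in D_{n+1}$ and $\{\overline{U_{x}}:x\in D_{n+1}\}$ is discrete, the set $G_{n+1}=G_{n}\cup\bigcup_{x\in D_{n+1}}\overline{U_{x}}$ is again closed and satisfies $G_{n+1}\cap D=D_{1}\cup\cdots\cup D_{n+1}$, so the recursion continues. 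After $\omega$ steps, $\{U_{x}:x\in D\}$ is pairwise disjoint: points in a common $D_{n}$ were separated at stage $n$, while if $x\in D_{n}$, $y\in D_{m}$ and $n<m$, then $U_{x}\subseteq G_{m-1}$ and $\overline{U_{y}}\cap G_{m-1}=\emptyset$. Hence $X$ is weakly collectionwise Hausdorff.

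The part I expect to require the most care is not any single estimate but the inductive bookkeeping: one must be sure, at each stage, that the accumulated set $G_{n}$ is genuinely closed (so that regularity applies to separate the next layer from it) and that the ``centers'' $D_{n+1}$ still lie off $G_{n}$. Both hinge on two soft facts worth isolating at the outset, namely that the union of the closures of a discrete family is closed and equals the closure of the union, and that the disjointness of the $H_{n}$ in hypothesis (2) passes to the traces $D_{n}$; with these in hand the argument is routine.
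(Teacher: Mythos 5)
Your proof is correct. There is, however, no in-paper argument to compare it against: the paper imports this lemma verbatim by citation from \cite[Lemma 3.13]{APT} and supplies no proof, so your write-up is genuinely new content relative to the source you were given. Your argument is the standard one (and, as far as I can tell, essentially the one in \cite{APT}): partition the given closed discrete set $D$ into the traces $D_{n}=D\cap H_{n}$, use hypothesis (3) together with regularity to shrink to a discrete family $\{O_{x}:x\in D_{n}\}$ inside each layer with $\overline{O_{x}}\cap D=\{x\}$, and then recursively push layer $n+1$ off the closed set $G_{n}$ accumulated from the closures of the earlier layers. The "soft facts" you isolate are exactly the load-bearing ones and are correctly deployed: a discrete family is pairwise disjoint, a shrinking of a discrete family is discrete, and the union of the closures of a discrete family is closed, which is what keeps each $G_{n}$ closed so that regularity can be applied at the next stage; the identity $G_{n}\cap D=D_{1}\cup\cdots\cup D_{n}$ is what guarantees the centers of layer $n+1$ lie off $G_{n}$. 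One definitional caveat: the paper defines collectionwise Hausdorff (discrete expansion) but never says what "weakly" means; your reading --- every closed discrete set expands to a pairwise disjoint open family --- is the one used in \cite{APT}, and your construction delivers exactly that (with the bonus that $U_{x}\cap D=\{x\}$), while the other common reading (separating only a subset of full cardinality) follows a fortiori. So the proposal is a complete and correct proof of the cited lemma.
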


\begin{theorem}
Let $(G,\tau ,\oplus)$ be a submaximal strongly topological gyrogroup with a symmetric open neighborhood base $\mathscr U$ at $0$. If $G$ has non-measurable cardinality, then $G$ is hereditarily paracompact.
\end{theorem}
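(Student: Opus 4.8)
The plan is to bring $G$ into the setting of Lemma~\ref{yl5} and then extract hereditary paracompactness from the decomposition that Lemma provides. First I would reduce to the case $\triangle(G)=|G|$: by Lemma~\ref{lll}, $G$ contains an open $L$-subgyrogroup $N$ with $|N|=\triangle(G)$; the left cosets of $N$ are clopen, each homeomorphic to $N$, so $G$ is the topological sum of these cosets. Since a topological sum is (hereditarily) paracompact exactly when each of its summands is, it suffices to prove that $N$ is hereditarily paracompact. Now $N$ is again a submaximal strongly topological gyrogroup of non-measurable cardinality, and $\triangle(N)=|N|$ because $N$ is open in $G$. If $|N|\le\omega$ then $N$ is a regular Lindel\"{o}f space, hence hereditarily Lindel\"{o}f and so hereditarily paracompact; so assume $\kappa:=|N|=\triangle(N)>\omega$.

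Take a canonical decomposition $\{H_{\alpha}:\alpha<\kappa\}$ of $N$ (Lemma~\ref{yl1}). Since $N$ is submaximal it is non-discrete and irresolvable, so Theorem~\ref{3dl1} yields $\{A_{n}:n\in\mathbb{N}\}$ with $\bigcup_{n}A_{n}=\kappa$ such that each $H_{A_{n}}=\bigcup\{H_{\alpha}:\alpha\in A_{n}\}$ is closed and nowhere dense in $N$ — hence, by submaximality, closed and discrete — and $\bigcup_{n}H_{A_{n}}=N$. Setting $F_{n}=H_{A_{n}}\setminus\bigcup_{k<n}H_{A_{k}}$ (a subset of a nowhere dense set, hence again closed and discrete in the submaximal $N$), I obtain $N=\bigsqcup_{n}F_{n}$ with the $F_{n}$ pairwise disjoint, closed and discrete; this is conditions (1) and (2) of Lemma~\ref{yl5}. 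Condition (3) is the real work: for each $n$ one needs a discrete (in $N$) open expansion of $H_{A_{n}}$ by translates $\{x\oplus W_{n}:x\in H_{A_{n}}\}$ with $W_{n}\in\mathscr U$ (restricting it to $F_{n}$ then gives what (3) asks). The strongly topological hypothesis enters through Lemma~\ref{yl3}, namely that a $W$-disjoint set has a discrete open expansion by translates, and the point of the canonical decomposition is property~(6) of Lemma~\ref{yl1}: if $\alpha<\beta$ and $g\in H_{\alpha}$ then $(\ominus g)\oplus H_{\beta}=H_{\beta}$, so for $x\in H_{\alpha}$ and $y\in H_{\beta}$ with $\alpha<\beta$ the element $(\ominus x)\oplus y$ lies in $H_{\beta}\subseteq H_{A_{n}}$, and $H_{A_{n}}$ may be kept disjoint from a suitable symmetric $U\in\mathscr U$ — this is exactly the mechanism in the proof of Theorem~\ref{4dl2}. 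The step I expect to be the heart of the argument is the remaining case: two distinct points $x,y$ of the \emph{same} block $H_{\alpha}$, for which $(\ominus x)\oplus y$ lies only in the subgyrogroup $G_{\alpha}$ and may be arbitrarily close to $0$, so $H_{A_{n}}$ need not be $U$-disjoint for any single $U\in\mathscr U$, and since $N$ need not be first countable one cannot split it along a countable neighbourhood base at $0$. Repairing this forces a secondary subdivision of each $H_{\alpha}$, using that $H_{\alpha}$ is invariant under left translations by $\bigcup_{\upsilon<\alpha}H_{\upsilon}$ (so that it breaks into coset-like orbits of a small subgyrogroup) together with Lemma~\ref{yl4}-style estimates, carried out so that $N$ stays a countable union of sets each admitting a discrete open expansion.

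Suppose now that $N=\bigcup_{n}H_{n}$ satisfies conditions (1)--(3) of Lemma~\ref{yl5}. Two observations then finish the proof. First, these conditions are inherited by every subspace $Y\subseteq N$: put $H_{n}^{Y}=H_{n}\cap Y$ and use $V_{x}\cap Y$ in place of $V_{x}$, noting that a subfamily of a discrete family is discrete and that discreteness passes to subspaces. Second, a regular space $X$ satisfying (1)--(3) is paracompact: given an open cover $\mathcal V$ of $X$, for each $n$ and each $x\in H_{n}$ pick $W_{x}\in\mathcal V$ with $x\in W_{x}$; then $\{V_{x}\cap W_{x}:x\in H_{n}\}$ is a discrete open family refining $\mathcal V$ and covering $H_{n}$, so $\bigcup_{n}\{V_{x}\cap W_{x}:x\in H_{n}\}$ is a $\sigma$-discrete — hence $\sigma$-locally finite — open refinement of $\mathcal V$ covering $X$, and a regular space with this property is paracompact (see \cite{E}). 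Applying this to $N$ and to each of its subspaces shows that $N$, and therefore $G$, is hereditarily paracompact. (Alternatively, Lemma~\ref{yl5} itself gives that $N$ is weakly collectionwise Hausdorff, which, together with its strong $\sigma$-discreteness, yields paracompactness.) Throughout, non-measurability of $|G|$ is used only to produce the countable family $\{A_{n}\}$ in Theorem~\ref{3dl1}, and submaximality is used to pass from nowhere dense sets to closed discrete ones.
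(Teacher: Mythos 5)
Your skeleton matches the paper's: reduce via Lemma~\ref{lll} to an open $L$-subgyrogroup $N$ with $|N|=\triangle(N)=\kappa$, use Theorem~\ref{3dl1} plus submaximality to write $N$ as a countable union of closed discrete pieces $H_{A_n}$, and feed this into Lemma~\ref{yl5}. You also correctly isolate the crux, namely condition~(3): separating two points of $H_{A_n}$ lying in the \emph{same} block $H_{\alpha}$, where $(\ominus x)\oplus y$ only lands in $G_{\alpha}$ and can be arbitrarily close to $0$. But at exactly that point your argument stops: the proposed repair (``a secondary subdivision of each $H_{\alpha}$ into coset-like orbits \dots carried out so that $N$ stays a countable union'') is never executed, and it is not how the difficulty is actually resolved. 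The paper's fix is both different and simpler: since $|G_{\alpha}|<\kappa=\triangle(G)$, the subgyrogroup $G_{\alpha}=\bigcup\{H_{\upsilon}:\upsilon\le\alpha\}$ is nowhere dense, hence (by submaximality) closed and discrete, so one may choose $U$ with $U\cap G_{\alpha}=\{0\}$ and apply Lemma~\ref{yl4}(2) to get a single $V\in\mathscr U$ (depending on $\alpha$) for which $\{x\oplus V:x\in G_{\alpha}\}$ is discrete. Setting $V_{p}=(p\oplus V)\cap W_{\alpha}^{n}$ with $W_{\alpha}^{n}=H_{\alpha}\oplus V_{n}$ then combines the two levels of discreteness: the family $\{W_{\alpha}^{n}:\alpha\in A_{n}\}$ separates distinct blocks (via property~(6) of Lemma~\ref{yl1}, as you describe), and within a block the $G_{\alpha}$-expansion separates points. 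No countable subdivision is needed, because condition~(3) of Lemma~\ref{yl5} permits $V_{x}$ to depend on the point (hence on $\alpha$); your worry about the absence of a countable base at $0$ is therefore moot. As it stands, the heart of the proof is missing.

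Two smaller remarks. Your disjointification $F_{n}=H_{A_{n}}\setminus\bigcup_{k<n}H_{A_{k}}$ is a correct touch (the paper glosses over condition~(2)). Your endgame also diverges from the paper's: instead of quoting \cite[Theorem 2.2]{APT} (submaximal plus weakly collectionwise Hausdorff implies hereditarily paracompact), you derive paracompactness of $N$ and of each of its subspaces directly from a $\sigma$-discrete open refinement via Michael's characterization of paracompactness for regular spaces, noting that conditions~(1)--(3) are inherited by subspaces. That alternative ending is sound and self-contained, but it does not compensate for the unproved step above.
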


\begin{proof}
From Lemma~\ref{lll}, $G$ has an open $L$-subgyrogroup whose cardinality and dispersion character are
equal. Hence, if we prove hereditary paracompactness of this open $L$-subgyrogroup of $G$, then $G$ will be
hereditarily paracompact. Without loss of generality, we may assume that $|G|=\kappa$ is a uncountable cardinal such that $\triangle(G)=\kappa$. From \cite[Theorem 2.2]{APT}, each submaximal weakly collectionwise Hausdorff space is hereditarily paracompact, hence it suffices to prove that $G$ is weakly collectionwise Hausdorff. Take a canonical decomposition $\{H_{\alpha}:\alpha <\kappa\}$ of $G$. We verify that a family of subsets $\{P_{n}: n\in \mathbb{N}\}$ satisfies the conditions of Lemma~\ref{yl5}.

Indeed, it follows from Theorem \ref{3dl1} that there exists a family $\{A_{n}:n\in \mathbb{N}\}$ of subsets of $\kappa$ such that $\bigcup \{A_{n}:n\in \mathbb{N}\}=\kappa$ and $P_{n}=H_{A_{n}}=\bigcup \{H_{\alpha}:\alpha \in A_{n}\}$ is closed and discrete in $G$. Now we only need to check the condition (3) in Lemma~\ref{yl5}.

For each $n\in \mathbb{N}$ there exists an open neighborhood $U_{n}$ of $0$ with $U_{n}\cap H_{A_{n}}\subset \{0\}$. Choose $V_{n},W_{n}\in \mathscr U$ such that $V_{n}\oplus V_{n}\subset W_{n}$ and $W_{n}\oplus W_{n}\subset U_{n}$. For every $\alpha \in A_{n}$, let $W_{\alpha}^{n}=H_{\alpha}\oplus V_{n}$. For arbitrary $g\in G$, we show that $O=g\oplus V_{n}$ can intersect at most one element of $\gamma _{n}=\{W_{\alpha}^{n}:\alpha \in A_{n}\}$. We assume that there are $\alpha ,\beta \in A_{n}$ with $\alpha <\beta$ such that $W_{\alpha}^{n}\cap O\not=\emptyset \not =W_{\beta}^{n}\cap O$. Therefore, there are $p\in H_{\alpha},~q\in H_{\beta}$ and $u,v,u_{1},v_{1}\in V_{n}$ such that $g\oplus v_{1}=p\oplus v$ and $g\oplus u_{1}=q\oplus u$. Since
\begin{eqnarray}
g&=&(g\oplus u_{1})\oplus gyr[g,u_{1}](\ominus u_{1})\nonumber\\
&=&(q\oplus u)\oplus gyr[g,u_{1}](\ominus u_{1})\nonumber\\
&\in &(q\oplus u)\oplus gyr[g,u_{1}](V_{n})\nonumber\\
&\subset &(q\oplus V_{n})\oplus V_{n}\nonumber\\
&=&q\oplus (V_{n}\oplus gyr[V_{n},q](V_{n})\nonumber\\
&=&q\oplus (V_{n}\oplus V_{n})\nonumber\\
&\subset &q\oplus W_{n}.\nonumber
\end{eqnarray}
Then, by the same method, we have
\begin{eqnarray}
p&\in &g\oplus W_{n}\nonumber\\
&\subset &(q\oplus W_{n})\oplus W_{n}\nonumber\\
&=&q\oplus (W_{n}\oplus gyr[W_{n},q](W_{n}))\nonumber\\
&=&q\oplus (W_{n}\oplus W_{n})\nonumber\\
&\subset &q\oplus U_{n}.\nonumber
\end{eqnarray}
Therefore, $0\not =(\ominus q)\oplus p\in U_{n}$. Moreover, $\ominus q\in H_{\beta}$ implies $(\ominus q)\oplus p\in H_{\beta}$ by (6) of Lemma~\ref{yl1}. It is contradict with $H_{A_{n}}\cap U_{n}\subset \{0\}$. Thus, $\gamma _{n}=\{W_{\alpha}^{n}:\alpha \in A_{n}\}$ is discrete in $G$.

For each $\alpha \in A_{n}$, since $|G_{\alpha}|<\kappa$, we have that the subgyrogroup $G_{\alpha}=\bigcup \{H_{\upsilon}:\upsilon \leq \alpha\}$ is closed and discrete in $G$. Let $U$ be an open neighborhood of $0$ such that $U\cap G_{\alpha}=\{0\}$. We can find $V,W\in \mathscr U$ such that $V\oplus V\subset W$ and $W\oplus W\subset U$. It follows from (2) of Lemma \ref{yl4} that the family $\mu =\{x\oplus V:x\in G_{\alpha}\}$ is discrete.

For an arbitrary $n\in \mathbb{N}$, if $\alpha \in A_{n}$ and $p\in H_{\alpha}$, let $V_{p}=(p\oplus V)\cap W_{\alpha}^{n}$. It is clear that $\mu _{n}=\{V_{p}:p\in P_{n}\}$ is discrete in $G$ and $p\in V_{p}$ for every $p\in P_{n}$.

Therefore, the conditions (1)-(3) in Lemma \ref{yl5} hold and it follows from Lemma \ref{yl5} that $G$ is weakly collectionwise Hausdorff.
\end{proof}

By \cite[Theorem 7.2]{E}, we know that if a normal space is a countable union of its strongly zero-dimensional spaces, then it is strongly zero-dimensional. Therefore, we have the following results.

\begin{corollary}
If a submaximal strongly topological gyrogroup $G$ has non-measurable cardinality, then $dim(G)=0$. In particular, $G$ cannot be connected.
\end{corollary}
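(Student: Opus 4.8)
The plan is to deduce the statement from the two main results already proved, together with the countable closed sum theorem for covering dimension quoted just above the corollary. First I would recall that, being a submaximal topological gyrogroup of non-measurable cardinality, $G$ is strongly $\sigma$-discrete by the corollary of Section~3; that is, $G=\bigcup\{D_{n}:n\in\mathbb{N}\}$, where each $D_{n}$ is a \emph{closed} discrete subspace of $G$. Every discrete space is strongly zero-dimensional, so $\dim D_{n}\leq 0$ for every $n$. On the other hand, being a submaximal strongly topological gyrogroup of non-measurable cardinality, $G$ is hereditarily paracompact by the preceding theorem; in particular $G$ is paracompact, and since all spaces here are Hausdorff, $G$ is normal. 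Thus $G$ is a normal space which is the countable union of the closed strongly zero-dimensional subspaces $D_{n}$, and \cite[Theorem 7.2]{E} gives that $G$ is strongly zero-dimensional, i.e.\ $\dim(G)=0$ (it is nonempty, hence the dimension is $0$ rather than $-1$).

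For the last assertion I would argue that a nonempty normal $T_{1}$-space $X$ with $\dim X=0$ is disconnected whenever $|X|\geq 2$: given distinct $x,y\in X$, the singletons $\{x\}$ and $\{y\}$ are disjoint closed sets, so by zero-dimensionality there is a clopen set $U$ with $x\in U$ and $y\notin U$, and then $\{U,\,X\setminus U\}$ is a separation of $X$. Since every space in this paper is assumed to be dense in itself, $G$ has no isolated points and is therefore infinite; in particular $|G|\geq 2$, so $G$ cannot be connected.

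Because the corollary rests entirely on results already in hand, there is no substantial obstacle here. The only points needing a little care are that the discrete pieces $D_{n}$ furnished by the strongly $\sigma$-discrete decomposition are genuinely closed in $G$ (this is exactly what ``strongly'' contributes, and is what licenses the closed sum theorem), and that hereditary paracompactness together with the standing Hausdorff hypothesis yields normality, which is the remaining hypothesis of \cite[Theorem 7.2]{E}.
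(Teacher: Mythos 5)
Your proof is correct and follows exactly the route the paper intends: combine the strong $\sigma$-discreteness of $G$ (Section~3), normality coming from the hereditary paracompactness theorem, and the countable closed sum theorem \cite[Theorem 7.2]{E}, then observe that a nonempty dense-in-itself strongly zero-dimensional space with at least two points is disconnected. Your explicit remarks that the pieces $D_n$ are closed (which is what licenses the sum theorem) and that $G$ is nonempty and infinite are exactly the details the paper leaves implicit.
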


\begin{corollary}
If there does not exist any measurable cardinal, then every submaximal strongly topological gyrogroup is hereditarily paracompact and zero-dimensional in the sense of the dimension dim. In particular, no submaximal infinite strongly topological gyrogroup is connected.
\end{corollary}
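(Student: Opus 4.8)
The plan is to reduce the statement to the case where the dispersion character equals the cardinality, then build a countable decomposition of $G$ into closed discrete pieces satisfying the hypotheses of Lemma~\ref{yl5}, and finally invoke \cite[Theorem 2.2]{APT}. First I would use Lemma~\ref{lll} to pass to an open $L$-subgyrogroup $N$ with $|N| = \triangle(N)$; since $G$ is a disjoint union of left translations (cosets) of $N$ and each coset is homeomorphic to $N$, hereditary paracompactness of $N$ yields that of $G$. So we may assume $|G| = \triangle(G) = \kappa$, and we may assume $\kappa$ is uncountable (the countable case being immediate since then $G$ is strongly $\sigma$-discrete, hence $\sigma$-metrizable or at least covered by the preceding corollaries). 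Since $G$ is submaximal it is irresolvable and non-discrete, and $\kappa$ is non-measurable, so Theorem~\ref{3dl1} applies to a canonical decomposition $\{H_\alpha : \alpha < \kappa\}$: it produces $\{A_n : n \in \mathbb{N}\}$ covering $\kappa$ with each $P_n := H_{A_n}$ closed and nowhere dense, hence (in a submaximal space) closed and discrete. Shrinking if necessary we can also arrange the $A_n$ to be pairwise disjoint, giving conditions (1) and (2) of Lemma~\ref{yl5}.

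The substantive part is condition (3): for each $x \in G$ one must produce an open neighborhood $V_x$ of $x$ such that for every fixed $n$ the family $\{V_x : x \in P_n\}$ is discrete in $G$. The strategy is to do this one index $n$ at a time and then refine. Fix $n$. Because $P_n = H_{A_n}$ is closed and discrete, there is an open neighborhood $U_n$ of $0$ with $U_n \cap H_{A_n} \subseteq \{0\}$; choose $V_n, W_n \in \mathscr{U}$ with $V_n \oplus V_n \subseteq W_n$ and $W_n \oplus W_n \subseteq U_n$. For $\alpha \in A_n$ set $W^n_\alpha = H_\alpha \oplus V_n$, an open set containing $H_\alpha$. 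The claim is that $\gamma_n = \{W^n_\alpha : \alpha \in A_n\}$ is discrete: given $g \in G$, if $g \oplus V_n$ met both $W^n_\alpha$ and $W^n_\beta$ with $\alpha < \beta$, then picking $p \in H_\alpha$, $q \in H_\beta$ in those intersections and chasing the right cancellation law together with the gyration-invariance of the $V_n$ (exactly as in the proof of Lemma~\ref{yl4}(2) and Theorem~\ref{4dl2}), one deduces $p \in q \oplus U_n$, whence $0 \neq (\ominus q) \oplus p \in U_n$; but by property (6) of Lemma~\ref{yl1}, $\ominus q \in H_\beta$ forces $(\ominus q)\oplus p \in H_\beta \subseteq H_{A_n}$, contradicting $U_n \cap H_{A_n} \subseteq \{0\}$. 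So each $g$ has a neighborhood meeting at most one member of $\gamma_n$, i.e. $\gamma_n$ is discrete.

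It remains to separate the finitely-or-more points of $P_n$ lying inside a single $W^n_\alpha$ (namely those in $H_\alpha$) by a further discrete family, and then intersect. For fixed $\alpha$, the initial segment $G_\alpha = \bigcup\{H_\upsilon : \upsilon \le \alpha\}$ has cardinality $< \kappa = \triangle(G)$, so $G_\alpha$ is closed and discrete in $G$; choosing $U, V, W \in \mathscr{U}$ with $V \oplus V \subseteq W \subseteq W \oplus W \subseteq U$ and $U \cap G_\alpha = \{0\}$, Lemma~\ref{yl4}(2) gives that $\{x \oplus V : x \in G_\alpha\}$ is discrete. For $p \in H_\alpha \subseteq G_\alpha$ put $V_p = (p \oplus V) \cap W^n_\alpha$; then $p \in V_p$, and $\mu_n = \{V_p : p \in P_n\}$ is discrete in $G$ because each $V_p$ refines both the discrete family $\gamma_n$ (different $\alpha$'s are separated by $W^n_\alpha$) and, within a fixed $\alpha$, the discrete family $\{x \oplus V : x \in G_\alpha\}$. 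This is exactly the setup of Lemma~\ref{yl5}, so $G$ is weakly collectionwise Hausdorff; being submaximal, it is hereditarily paracompact by \cite[Theorem 2.2]{APT}. The main obstacle is the bookkeeping in condition (3): one must be careful that the single neighborhood $V_x$ assigned to $x \in G$ works simultaneously as $x$ ranges over $P_n$ for a fixed $n$ — the two-stage refinement (first separate the blocks $W^n_\alpha$, then separate points within a block using that small initial segments are closed discrete) is what makes this go through, and every gyrocommutation step must be justified via the symmetric, gyration-invariant base $\mathscr{U}$ rather than by group-theoretic cancellation.
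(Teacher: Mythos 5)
Your argument for hereditary paracompactness is correct, but note that in the paper this corollary is not meant to be proved from scratch: it follows in one line from the two results immediately preceding it, because the hypothesis ``there is no measurable cardinal'' just means that \emph{every} cardinal is non-measurable, so the theorem on hereditary paracompactness and the corollary giving $\dim(G)=0$ apply to every submaximal strongly topological gyrogroup. What you have written is essentially a re-derivation of the paper's proof of that theorem (reduction via Lemma~\ref{lll} to the case $|G|=\triangle(G)$, the decomposition from Theorem~\ref{3dl1}, the two-stage discrete refinement verifying condition (3) of Lemma~\ref{yl5}, and then \cite[Theorem 2.2]{APT}), and that part is sound.

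The genuine gap is that the statement asserts three things and you prove only one. You say nothing about zero-dimensionality in the sense of $\dim$, and nothing about why no infinite such gyrogroup can be connected. The paper's route is: $G$ is a countable union of closed discrete subspaces (this is exactly the family $\{P_n\}$ you constructed, or the strong $\sigma$-discreteness corollary of Section~3); each closed discrete subspace is strongly zero-dimensional; $G$ is paracompact, hence normal; so by the countable sum theorem \cite[Theorem 7.2]{E} one gets $\dim(G)=0$, and a $T_1$ space with $\dim=0$ and more than one point cannot be connected. Without this step your proof does not establish the full statement. A smaller point: your parenthetical dismissal of the countable case (``hence $\sigma$-metrizable or at least covered by the preceding corollaries'') is vague; the clean justification is that a countable regular space is Lindel\"of, hence hereditarily Lindel\"of and hereditarily paracompact, and the zero-dimensionality argument above still applies.
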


Since every strongly topological gyrogroup is a topological gyrogroup, it is natural to pose the following question.

\begin{question}
Is each submaximal topological gyrogroup $G$ of non-measurable cardinality hereditarily paracompact?
\end{question}

\smallskip
{\bf Acknowledgements}. We wish to thank anonymous referees for the detailed list of corrections, suggestions to the paper, and all her/his efforts
in order to improve the paper.

\end{document}